\newtheorem{thm}{Theorem}[section]
 \newtheorem{cor}{Corollary}[section]
 \newtheorem{lem}{Lemma}[section]
 \newtheorem{prop}{Proposition}[section]
 \newtheorem{defn}{Definition}[section]
\newtheorem{rem}{Remark}[section]
\begin{document}
\begin{center}
{\large{\bf Global existence and minimal decay regularity for the Timoshenko system: The case of non-equal wave speeds}}
\end{center}
\begin{center}
\footnotesize{Jiang Xu}\\[2ex]
\footnotesize{Department of Mathematics, \\ Nanjing
University of Aeronautics and Astronautics, \\
Nanjing 211106, P.R.China,}\\
\footnotesize{jiangxu\underline{ }79@nuaa.edu.cn}\\
\vspace{10mm}

\footnotesize{Naofumi Mori}\\[2ex]
\footnotesize{Graduate School of Mathematics,\\ Kyushu University, Fukuoka 819-0395, Japan,}\\
\footnotesize{n-mori@math.kyushu-u.ac.jp}\\

\vspace{10mm}

\footnotesize{Shuichi Kawashima}\\[2ex]
\footnotesize{Faculty of Mathematics, \\ Kyushu University, Fukuoka 819-0395, Japan,}\\
\footnotesize{kawashim@math.kyushu-u.ac.jp}\\
\end{center}
\vspace{6mm}

\begin{abstract}
As a continued work of \cite{MXK}, we are concerned with the Timoshenko system in the case of non-equal wave speeds, which admits
the dissipative structure of \textit{regularity-loss}. Firstly, with the modification of a priori estimates in \cite{MXK},
we construct global solutions to the Timoshenko system pertaining to data in the Besov space with the regularity $s=3/2$. Owing to the weaker dissipative mechanism, extra higher regularity than that for the global-in-time existence is usually imposed to obtain the optimal decay rates of classical solutions, so
it is almost impossible to obtain the optimal decay rates in the critical space. To overcome the outstanding difficulty, we develop a new frequency-localization time-decay inequality,
which captures the information related to the integrability at the high-frequency part. Furthermore, by the energy approach in terms of high-frequency and low-frequency decomposition, we show the optimal decay rate for Timoshenko system in critical Besov spaces, which improves previous works greatly.
\end{abstract}

\noindent\textbf{AMS subject classification.} 35L45;\ 35B40;\ 74F05\\
\textbf{Key words and phrases.} Global existence; minimal decay regularity; critical Besov spaces; Timoshenko system

\section{Introduction}
In this work, we are concerned with the following Timoshenko system (see \cite{T1,T2}), which is a set of two coupled wave equations of the form
\begin{align}
\left\{\begin{array}{l}
       \varphi_{tt}-(\varphi_x-\psi)_x=0,\\[2mm]
       \psi_{tt}-\sigma(\psi_{x})_{x}-(\varphi_x-\psi)+\gamma \psi_t =0.
       \end{array}\right. \label{R-E1}
\end{align}
System (\ref{R-E1}) describes the transverse vibrations of a beam. Here, $t\geq 0$ is the time variable, $x\in \mathbb{R}$ is the spacial
variable which denotes the point on the center line of the beam,
$\varphi(t,x)$ is the transversal displacement of the beam from an equilibrium state, and $\psi(t,x)$ is the rotation
angle of the filament of the beam. The smooth function $\sigma(\eta)$ satisfies $\sigma'(\eta)>0$ for any $\eta\in\mathbb{R}$, and $\gamma$
is a positive constant. We focus on the Cauchy problem of (\ref{R-E1}), so the initial data are supplemented as
\begin{equation}
(\varphi, \varphi_{t}, \psi, \psi_{t})(x,0)
=(\varphi_{0}, \varphi_{1}, \psi_{0}, \psi_{1})(x).\label{R-E2}
\end{equation}
Based on the change of variable introduced by Ide, Haramoto, and the third author \cite{IHK}:
\begin{align}\label{R-E3}
v=\varphi_x-\psi, \quad
u=\varphi_t, \quad
z=a\psi_x, \quad
y=\psi_t,
\end{align}
with $a>0$ being the sound speed defined by $a^2=\sigma'(0)$, it is convenient to rewrite (\ref{R-E1})-(\ref{R-E2})
as a Cauchy problem for the first-order hyperbolic system of $U=(v,u,z,y)^{\top}$
\begin{align}\label{R-E4}
\left\{\begin{array}{l}
        U_t + A(U)U_x + LU =0,\\[2mm]
        U(x, 0) = U_0(x)
\end{array}\right.
\end{align}
with $U_{0}(x)=(v_{0}, u_{0}, z_{0}, y_{0})(x)$, where $v_0=\varphi_{0,x}-\psi_{0}$,
$u_0=\varphi_1$, $z_0=a\psi_{0,x}$, $y_0=\psi_1$ and
\begin{align*}
A(U)=-\left(
\begin{array}{cccc}
0 & 1 & 0 & 0 \\
1 & 0 & 0 & 0 \\
0 & 0 & 0 & a \\
0 & 0 &\frac{\sigma'(z/a)}{a} & 0
\end{array}
\right),\ \ \
L=\left(
\begin{array}{cccc}
0 & 0 & 0 & 1 \\
0 & 0 & 0 & 0 \\
0 & 0 & 0 & 0 \\
-1 & 0 & 0 & \gamma
\end{array}
\right).
\end{align*}
Note that $A(U)$ is a real symmetrizable matrix due to $\sigma'(z/a)>0$, and the dissipative matrix $L$ is nonnegative definite but not symmetric. Such degenerate dissipation forces (\ref{R-E4}) to go beyond
the class of generally dissipative hyperbolic systems, so the recent global-in-time existence (see \cite{XK1}) for hyperbolic systems with symmetric dissipation
can not be applied directly, which is the main motivation on studying the Timoshenko system (\ref{R-E1}).

Let us review several known results on (\ref{R-E1}). In a bounded domain, it is known that (\ref{R-E1}) is exponentially stable if the damping term $\varphi_{t}$ is also present on the left-hand side of the first equation of (\ref{R-E3}) (see, e.g., \cite{RFSC}). Soufyane \cite{S} showed that (\ref{R-E1}) could not be exponentially stable by considering only the damping term of the form $\psi_{t}$, unless for the case of $a=1$ (equal wave speeds). A similar result was obtained by Rivera and Racke \cite{RR2} with an alternative proof. In addition, Rivera and Racke \cite{RR1} also investigated
the Timoshenko system with the heat conduction, which is described by the classical Fourier law. In the whole space, the third author and his collaborators \cite{IHK} considered the corresponding linearized form of
 (\ref{R-E4}):
\begin{equation}\label{R-E5}
\left\{\begin{array}{l}
        v_t-u_x+y=0,\\[2mm]
        u_t-v_x=0,\\[2mm]
        z_t-ay_x=0,\\[2mm]
         y_t-az_x-v+\gamma y=0,\\[2mm]
         (v, u, z, y)(x, 0)
=(v_{0}, u_{0}, z_{0}, y_{0})(x),
\end{array}\right.
\end{equation}
and showed that the dissipative structure
could be characterized by
\begin{equation}
\left\{\begin{array}{l}
{\rm Re}\,\lambda(i\xi)\leq -c\eta_1(\xi)
   \qquad {\rm for} \quad a=1, \\[1mm]
{\rm Re}\,\lambda(i\xi)\leq -c\eta_2(\xi)
   \qquad {\rm for} \quad a\neq 1,
\end{array}\right.\label{R-E6}
\end{equation}
where $\lambda(i\xi)$ denotes the eigenvalues of the system
(\ref{R-E5}) in the Fourier space, $\eta_1(\xi)=\frac{\xi^2}{1+\xi^2}$,
$\eta_2(\xi)=\frac{\xi^2}{(1+\xi^2)^2}$, and $c>0$ is some
constant. Consequently, the following decay properties were established
for $U=(v,u,z,y)^{\top}$ of (\ref{R-E5}) (see \cite{IHK} for details):
\begin{equation}\label{R-E7}
\|\partial_x^k U(t)\|_{L^2}
\lesssim
(1+t)^{-\frac{1}{4}-\frac{k}{2}}\|U_0\|_{L^1}
+e^{-ct}\|\partial_x^kU_0\|_{L^2}
\end{equation}
for $a=1$, and
\begin{equation}\label{R-E8}
\|\partial_x^k U(t)\|_{L^2}
\lesssim
(1+t)^{-\frac{1}{4}-\frac{k}{2}}\|U_0\|_{L^1}
+(1+t)^{-\frac{l}{2}}\|\partial_x^{k+l}U_0\|_{L^2}
\end{equation}
for $a\neq 1$. Recently, under the additional assumption $\int_{\mathbb{R}}U_{0}dx=0$, Racke and Said-Houari \cite{RS} strengthened  (\ref{R-E7})-(\ref{R-E8}) such that linearized solutions
decay faster with a rate of $t^{-\gamma/2}$, by introducing the integral space $L^{1,\gamma}(\mathbb{R})$.

\begin{rem}
Clearly, the high frequency part of (\ref{R-E7}) yields an exponential decay, whereas the corresponding part of (\ref{R-E8}) is of the regularity-loss type,  since $(1+t)^{-\ell/2}$ is created by assuming the additional $\ell$-th order regularity on the initial data. Consequently, extra higher regularity than that for global-in-time existence of classical solutions
is imposed to obtain the optimal decay rates.
\end{rem}

In \cite{IK}, Ide and the third author performed the time-weighted approach to establish the global existence
and asymptotic decay of solutions to the nonlinear problem (\ref{R-E4}). To overcome the difficulty caused by the regularity-loss property,
the spatially regularity $s\geq6$ was needed. Denote by $s_{c}$ the critical regularity for global existence of classical solutions.
Actually, the local-in-time existence theory of Kato and Majda \cite{K,M} implies that $s_{c}=2$ for the Timoshenko system (\ref{R-E4}),
actually, the
extra regularity is used to take care of optimal decay estimates. Consequently, some natural questions follow. Is $s=6$ the minimal decay regularity for
(\ref{R-E4}) with the regularity-loss? If not, which index characterises
the minimal decay regularity? This motivates the following general definition.
\begin{defn}\label{defn1.1}
If the optimal decay rate of $L^{1}(\mathbb{R}^n)$-$L^2(\mathbb{R}^n)$ type is achieved under the lowest regularity assumption, then the lowest index is called the minimal decay regularity index for dissipative systems of regularity-loss, which is labelled as $s_{D}$.
\end{defn}

Recently, we are concerned with the global existence and large-time behavior for (\ref{R-E4})
in spatially critical Besov spaces. To the best of our knowledge, there are few results available in this direction for the Timoshenko system, although
the critical space has already been succeeded in the study of fluid dynamical equations, see \cite{AGZ,D1,H,PZ} for Navier-Stokes equations,
\cite{D2,XW,XY,Z} for Euler equations and related models. In \cite{XK1,XK2}, under the assumptions of dissipative entropy and Shizuta-Kawashima condition, the first and third authors have already investigated generally dissipative systems, however, the Timoshenko system admits the non-symmetric dissipation and goes beyond the class. Hence, as a first step, we \cite{MXK} considered the easy case, that is, (\ref{R-E4}) with the equal wave speed ($a=1$).
By virtue of an elementary
fact in Proposition \ref{prop2.3} (also see \cite{XK1}) that indicates the relation between homogeneous and inhomogeneous Chemin-Lerner
spaces, we first constructed global solutions pertaining to data
in the Besov space $B^{3/2}_{2,1}(\mathbb{R})$. Furthermore, the optimal decay rates
of solution and its derivatives are shown in the space $B^{3/2}_{2,1}(\mathbb{R})\cap \dot{B}^{-1/2}_{2,\infty}(\mathbb{R})$
by the frequency-localization Duhamel principle and energy approach in terms of high-frequency and low-frequency decomposition.

In the present paper, we hope to establish similar results for (\ref{R-E4}) with non-equal wave speeds ($a\neq1$) that has weaker dissipative mechanism. If done, we shall improve two regularity indices for  Timoshenko system with regularity-loss: $s_{c}=3/2$ for global-in-time existence and $s_{D}=3/2$ for the optimal decay estimate, which lead to
reduce significantly the regularity requirements on the initial data in comparison with \cite{IK}.

Before main results, let us explain new technical points for (\ref{R-E4}) with $a\neq1$ and the strategy to get round the obstruction.
Firstly, as in \cite{MXK}, the degenerate non-symmetric damping enables us to capture the dissipation from contributions of
$(y,v,u_{x},z_{x})$, however, there is an additional norm related to $u_{x}$ in the proof for the dissipation of $v$. Indeed, we need to
 carefully take care of the topological relation between $\|u_{x}\|_{\widetilde{L}^{2}_{T}(\dot{B}^{-1/2}_{2,1})}$ and $\|u_{x}\|_{\widetilde{L}^{2}_{T}(B^{-1/2}_{2,1})}$ as in Proposition \ref{prop2.3}. To do this, we localize (\ref{R-E4}) with inhomogeneous blocks rather than homogeneous blocks to
obtain the dissipative estimate for $v$.

Secondly, due to the weaker mechanism of regularity-loss, it seems that
there is no possibility to capture optimal decay rates in the critical space $B^{3/2}_{2,1}(\mathbb{R})$, since
the polynomial decay at the high-frequency part comes from the fact that the initial data is imposed
extra higher regularity (see (\ref{R-E8})). To overcome the outstanding difficulty, there are new ingredients in comparison with the case of equal wave speeds in \cite{MXK}. Precisely, we develop a new frequency-localization time-decay
inequality for the dissipative rate $\eta(\xi)=\frac{|\xi|^2}{(1+|\xi|^2)^2}$ in $\mathbb{R}^{n}$, see Proposition \ref{prop3.1}. At the formal level, we see that the high-frequency part decays in time not only with algebraic rates of any order as long as the function is spatially regular enough, but also additional information related the $L^p$-integrability is available. Consequently, the high-frequency estimate in energy approaches
can be divided into two parts, and on each part, different values of $p$ (for example, $p=1$ or $p=2)$ are chosen to get desired decay estimates, see Lemma \ref{lem5.1}. Additionally, it should be worth noting that the energy approach is totally different from that in \cite{MXK}, where the frequency-localization Duhamel principle was used. Here, we shall employ somewhat ``the square formula of the Duhamel principle" based on the Littlewood-Paley pointwise estimate in
Fourier space for the linear system with right-hand side, see (\ref{R-E36})-(\ref{R-E37}) for details.

Our main results focus on the Timoshenko system with non-equal wave speeds ($a\neq1$), which are stated as follows.
\begin{thm}\label{thm1.1}
Suppose that $U_{0}\in B^{3/2}_{2,1}(\mathbb{R})$. There exists a positive constant $\delta_{0}$ such that if
$$\|U_{0}\|_{B^{3/2}_{2,1}(\mathbb{R})}\leq
\delta_{0}, $$
then the Cauchy problem (\ref{R-E4}) has a unique
global classical solution $U\in \mathcal{C}^{1}(\mathbb{R}^{+}\times
\mathbb{R})$ satisfying
$$U \in\widetilde{\mathcal{C}}(B^{3/2}_{2,1}(\mathbb{R}))\cap\widetilde{\mathcal{C}}^{1}(B^{1/2}_{2,1}(\mathbb{R}))$$
Moreover, the following energy inequality holds that
\begin{eqnarray}
&&\|U\|_{\widetilde{L}^\infty(B^{3/2}_{2,1}(\mathbb{R}))}+\Big(\|y\|_{\widetilde{L}^2_{T}(B^{3/2}_{2,1})}+\|(v,z_{x})\|_{\widetilde{L}^2_{T}(B^{1/2}_{2,1})}
+\|u_{x}\|_{\widetilde{L}^2_{T}(B^{-1/2}_{2,1})}\Big)\nonumber\\&\leq&  C_{0}\|U_{0}\|_{B^{3/2}_{2,1}(\mathbb{R})}, \label{R-E9}
\end{eqnarray}
where $C_{0}>0$ is a constant.
\end{thm}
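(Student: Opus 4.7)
The plan is to follow the classical Friedrichs--Kato--Majda scheme combined with frequency-localized energy estimates in the Chemin--Lerner framework. First I would invoke the local-in-time existence theory for symmetrizable hyperbolic systems applied to (\ref{R-E4}) to produce a unique maximal solution $U\in\widetilde{\mathcal{C}}([0,T_{*}); B^{3/2}_{2,1}(\mathbb{R}))\cap \widetilde{\mathcal{C}}^{1}([0,T_{*}); B^{1/2}_{2,1}(\mathbb{R}))$ for some $T_{*}>0$. The global existence and the energy inequality (\ref{R-E9}) will then follow from a standard continuation argument, provided one can close the a priori bound
\begin{equation*}
E(T):=\|U\|_{\widetilde{L}^{\infty}_{T}(B^{3/2}_{2,1})}+\|y\|_{\widetilde{L}^{2}_{T}(B^{3/2}_{2,1})}+\|(v,z_{x})\|_{\widetilde{L}^{2}_{T}(B^{1/2}_{2,1})}+\|u_{x}\|_{\widetilde{L}^{2}_{T}(B^{-1/2}_{2,1})}\lesssim \|U_{0}\|_{B^{3/2}_{2,1}}
\end{equation*}
under a smallness hypothesis on $E(T)$ itself.

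The heart of the argument is the a priori estimate for the localized system. I would apply the \emph{inhomogeneous} dyadic blocks $\Delta_{q}$ (with $q\ge -1$) to (\ref{R-E4}), symmetrize using the natural quadratic form of $A(U)$, and take the $L^{2}$ inner product with $\Delta_{q}U$; the dissipation provided directly by $L$ is the $y$-component, giving control of $\|y\|_{\widetilde{L}^{2}_{T}(B^{3/2}_{2,1})}$. The remaining dissipative norms $\|v\|_{\widetilde{L}^{2}_{T}(B^{1/2}_{2,1})}$, $\|z_{x}\|_{\widetilde{L}^{2}_{T}(B^{1/2}_{2,1})}$ and $\|u_{x}\|_{\widetilde{L}^{2}_{T}(B^{-1/2}_{2,1})}$ are hidden and must be extracted by building block-wise Lyapunov functionals from auxiliary cross products, in the spirit of \cite{MXK}: one multiplies the localized $v$-equation against $\Delta_{q}u_{x}$ and couples the $z$- and $y$-equations, then sums the resulting identities with carefully tuned weights. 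The new feature in the non-equal wave speed case is that the cross-product identity for the $v$-dissipation generates an additional term of the form $\|u_{x}\|_{\widetilde{L}^{2}_{T}(\dot{B}^{-1/2}_{2,1})}$. This is precisely the reason to use inhomogeneous rather than homogeneous blocks, since Proposition \ref{prop2.3} controls this homogeneous norm by the inhomogeneous one $\|u_{x}\|_{\widetilde{L}^{2}_{T}(B^{-1/2}_{2,1})}$, which is in turn absorbed by the left-hand side of the a priori estimate.

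For the nonlinear contribution $(A(U)-A(0))U_{x}$, together with the term generated by $\sigma(z/a)$, I would use Bony's paradifferential calculus. The critical choice $s=3/2$ makes $B^{3/2}_{2,1}(\mathbb{R})$ an algebra continuously embedded in $L^{\infty}$, so the standard commutator estimate for $[\Delta_{q},F(U)]\partial_{x}U$ and the composition estimate for smooth $F$ yield a bound of the form $C\,\|U\|_{\widetilde{L}^{\infty}_{T}(B^{3/2}_{2,1})}\cdot(\text{dissipative norms})$, after multiplying by $2^{3q/2}$ and taking $\ell^{1}(q)$. Inserting these into the linearized a priori bound gives an inequality $E(T)\le C\|U_{0}\|_{B^{3/2}_{2,1}}+CE(T)^{2}$, which a bootstrap/continuity argument closes once $\delta_{0}$ is small.

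I expect the main obstacle to be exactly the recovery of the $v$-dissipation at the low-frequency tail: in the $a\ne 1$ regime the two wave speeds no longer match, so the cancellation that produced a clean $v$-estimate in \cite{MXK} breaks down and one is forced to pay an extra $\|u_{x}\|_{\widetilde{L}^{2}_{T}(\dot{B}^{-1/2}_{2,1})}$ term. Localizing with inhomogeneous blocks and invoking Proposition \ref{prop2.3} is the workaround, but it is what forces the particular dissipation space $B^{-1/2}_{2,1}$ for $u_{x}$ appearing on the right-hand side of (\ref{R-E9}); getting all the weights and couplings consistent across blocks is the delicate bookkeeping on which the theorem hinges.
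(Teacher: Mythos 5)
Your proposal follows essentially the same route as the paper: local existence from the symmetrizable-hyperbolic theory, inhomogeneous frequency-localized energy estimates with cross-product Lyapunov functionals to recover the hidden dissipation of $v$, $z_x$, $u_x$ (the $(a^2-1)\Delta_q y\,\Delta_q u_x$ term being absorbed via Young's inequality into the $\|u_x\|_{\widetilde{L}^2_T(B^{-1/2}_{2,1})}$ norm), product and composition estimates in the algebra $B^{3/2}_{2,1}$ for the nonlinearity, and a continuation argument to close. The only quibble is that Proposition \ref{prop2.3} runs in the opposite direction from what you state at negative regularity — the point of localizing with $\Delta_q$, $q\ge -1$, is to make the extra term land directly in the inhomogeneous norm $\widetilde{L}^2_T(B^{-1/2}_{2,1})$ so that the homogeneous one never appears — but this does not affect the soundness of your strategy.
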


\begin{rem}
Theorem \ref{thm1.1} exhibits the optimal critical regularity ($s_{c}=3/2$) of global-in-time existence for (\ref{R-E4}), which was proved by the revised energy estimates in comparison with \cite{MXK}, along with the local-in-time existence result in Proposition \ref{prop4.1}. Observe that there is 1-regularity-loss phenomenon for the dissipation rate of $(v,u_{x})$.
\end{rem}

Furthermore, with the aid of the new frequency-localization time-decay
inequality in Proposition \ref{prop3.1}, we can obtain the the optimal decay estimates by using the time-weighted energy approach in terms of high-frequency and
low-frequency decomposition.

\begin{thm}\label{thm1.2}
Let $U(t,x)=(v,u,z,y)(t,x)$ be the global classical solution of Theorem \ref{thm1.1}.
Assume that the initial data satisfy $U_{0}\in B^{3/2}_{2,1}(\mathbb{R})\cap\dot{B}^{-1/2}_{2,\infty}(\mathbb{R})$.
Set $I_{0}:=\|U_{0}\|_{B^{3/2}_{2,1}(\mathbb{R})\cap\dot{B}^{-1/2}_{2,\infty}(\mathbb{R})}$. If $I_{0}$ is sufficiently small,
then the classical solution $U(t,x)$ of (\ref{R-E4}) admits the optimal decay estimate
\begin{eqnarray}
\|U\|_{L^2}\lesssim I_{0}(1+t)^{-\frac{1}{4}}. \label{R-E10}
\end{eqnarray}
\end{thm}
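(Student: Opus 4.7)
The plan is to combine a high-low frequency decomposition with a time-weighted energy functional, using Proposition~\ref{prop3.1} to compensate for the regularity-loss at high frequencies. Introduce
\[
\mathcal{E}(t) := \sup_{0 \le \tau \le t} (1+\tau)^{1/4}\|U(\tau)\|_{L^2(\mathbb{R})},
\]
and aim to show $\mathcal{E}(t) \lesssim I_0$ by a continuity/bootstrap argument that leverages the global small solution furnished by Theorem~\ref{thm1.1}. Once this is achieved, (\ref{R-E10}) follows immediately.

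Rewrite (\ref{R-E4}) as $U_t + A_0 U_x + L U = F(U)$, where $A_0 := A(0)$ is the constant-coefficient linearization and $F(U) := (A_0 - A(U))U_x$ is quadratic in $(U, U_x)$. Split $U = U^\ell + U^h$ by a Littlewood-Paley cutoff at $|\xi| \sim 1$. For the low-frequency piece, apply Duhamel's formula and the linear estimate
\[
\|U^\ell(t)\|_{L^2} \lesssim (1+t)^{-1/4}\|U_0\|_{\dot{B}^{-1/2}_{2,\infty}},
\]
which follows from the eigenvalue analysis of \cite{IHK} together with the embedding $L^1 \hookrightarrow \dot{B}^{-1/2}_{2,\infty}$; this handles the free evolution. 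For the nonlinear Duhamel term, control $F(U)$ in $\dot{B}^{-1/2}_{2,\infty}$ (or $L^1$) via standard product laws in critical Besov spaces, then execute a time-convolution argument that absorbs $\mathcal{E}(t)$ quadratically using the $B^{3/2}_{2,1}$ smallness already provided by Theorem~\ref{thm1.1}.

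The high-frequency estimate is the main obstacle, because the dissipation rate $\eta_2(\xi) = |\xi|^2/(1+|\xi|^2)^2$ decays as $|\xi|\to\infty$, so a naive Duhamel argument would require extra regularity that is not available in $B^{3/2}_{2,1}$. Instead of the block-Duhamel approach of \cite{MXK}, I would employ the ``square formula of the Duhamel principle'' alluded to in the introduction: for each inhomogeneous block $\Delta_q U^h$ establish a pointwise-in-Fourier inequality of the form
\[
|\widehat{\Delta_q U^h}(\xi,t)|^2 \lesssim e^{-c\eta_2(\xi)t}|\widehat{\Delta_q U_0^h}(\xi)|^2 + \int_0^t e^{-c\eta_2(\xi)(t-\tau)}|\widehat{\Delta_q F(U)}(\xi,\tau)|^2\,d\tau,
\]
integrate in $\xi$, and sum the dyadic blocks via Proposition~\ref{prop3.1}. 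The decisive point is that Proposition~\ref{prop3.1} couples algebraic time-decay with additional $L^p$-integrability information; splitting the time integral into $[0,t/2]$ (where the semigroup kernel is evaluated at large $t-\tau$, so $p=1$ on $F(U)$ gives strong decay) and $[t/2,t]$ (where $\tau$ is large, so using $p=2$ paired with the already-established bound $\|U(\tau)\|_{L^2} \lesssim \mathcal{E}(t)(1+\tau)^{-1/4}$ suffices) allows the target rate $(1+t)^{-1/4}$ to be extracted using only the regularity available from Theorem~\ref{thm1.1}.

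Combining the low- and high-frequency estimates yields an inequality of the schematic form
\[
\mathcal{E}(t) \lesssim I_0 + \mathcal{E}(t)^2 + \delta_0\,\mathcal{E}(t),
\]
and smallness of $I_0$ (hence of $\delta_0$ via Theorem~\ref{thm1.1}) closes the bootstrap, giving (\ref{R-E10}). I expect the main technical difficulty to be the bookkeeping in the high-frequency step: the nonlinear source $F(U)$ must be simultaneously controlled in an $L^1$-type and an $L^2$-type norm at the exact regularity level dictated by Proposition~\ref{prop3.1}, and the splitting parameters (the time cutoff and the value of $p$ in each regime) must be tuned so that no extra smoothness beyond $B^{3/2}_{2,1}$ is consumed.
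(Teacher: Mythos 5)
Your overall architecture is exactly the paper's: the time-weighted functional $\sup_\tau(1+\tau)^{1/4}\|U(\tau)\|_{L^2}$, the blockwise ``squared Duhamel'' inequality (\ref{R-E36})--(\ref{R-E37}), Proposition \ref{prop3.1} to convert it into decay, and a splitting of the nonlinear time integral at $t/2$ with different values of $p$ on the two pieces, closed by a smallness bootstrap using the dissipation norm $\mathcal{D}(t)\lesssim\|U_0\|_{B^{3/2}_{2,1}}$ from Theorem \ref{thm1.1}. However, you have assigned the exponents $p$ to the two time regimes the wrong way round, and you yourself identify this tuning as the crux; with your assignment the estimate does not close at regularity $3/2$. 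In Proposition \ref{prop3.1} the high-frequency decay exponent is $-\tfrac{\ell}{2}+\tfrac{n}{2}(\tfrac1p-\tfrac12)$ under the constraint $\ell>n(\tfrac1p-\tfrac12)$ and the regularity cost is $\|f\|_{\dot B^{\sigma+\ell}_{p,r}}$. Taking $p=1$ therefore \emph{worsens} the semigroup decay (in $n=1$ it adds $+\tfrac14$ to the exponent) while relaxing the norm to an $L^1$-based Besov norm; at the available regularity for $g(z)=O(z^2)$ (namely $\|g(z)\|_{\dot B^{3/2}_{1,2}}\lesssim\|z\|_{L^2}\|z_x\|_{\dot B^{1/2}_{2,1}}$, forcing $\ell=1/2$, $\sigma=1$) the semigroup factor is $(1+t-\tau)^{0}$, i.e.\ no decay at all. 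So on $[0,t/2]$, where $t-\tau\geq t/2$ is large and the semigroup must supply the whole rate, your choice $p=1$ yields nothing: you would need $\ell>1/2$, hence $g(z)\in\dot B^{3/2+\varepsilon}_{1,r}$, hence $z\in\dot B^{3/2+\varepsilon}_{2,r}$, which exceeds $B^{3/2}_{2,1}$. The paper instead takes $p=2$, $\ell=1/2$ there (see (\ref{R-E411})--(\ref{R-E42})), getting $(1+t-\tau)^{-1/2}\lesssim(1+t)^{-1/2}$ from the semigroup and paying only $\|g(z)\|_{\dot B^{3/2}_{2,2}}\lesssim\|z\|_{L^\infty}\|z_x\|_{\dot B^{1/2}_{2,1}}$, whose time integral is controlled by $\mathcal{D}(t)^2$.

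Symmetrically, on $[t/2,t]$ your choice $p=2$ fails: the factor $(1+t-\tau)^{-1/2}$ is of order $1$ near $\tau=t$, and the norm $\|z\|_{L^\infty}\|z_x\|_{\dot B^{1/2}_{2,1}}$ carries no usable time decay ($\|z\|_{L^\infty}$ is only known to be small, not decaying at rate $(1+\tau)^{-1/4}$), so you cannot extract $(1+t)^{-1/2}$ for $\|U\|_{L^2}^2$. The paper's choice there is $p=1$ (see (\ref{R-E43})--(\ref{R-E45})): the semigroup gives no decay, but $\|g(z)\|_{\dot B^{3/2}_{1,2}}\lesssim\|z\|_{L^2}\|z_x\|_{\dot B^{1/2}_{2,1}}$ lets the already-weighted bound $\|z(\tau)\|_{L^2}\lesssim\mathcal{N}(t)(1+\tau)^{-1/4}$ supply $(1+\tau)^{-1/2}\lesssim(1+t)^{-1/2}$ since $\tau\geq t/2$, with $\int_{t/2}^t\|z_x\|^2_{\dot B^{1/2}_{2,1}}d\tau\leq\mathcal{D}(t)^2$ absorbing the rest. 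In short: $p=2$ where $t-\tau$ is large (semigroup does the work), $p=1$ where $\tau$ is large (the solution's own $L^2$ decay does the work). A secondary point: you treat the nonlinearity generically as $F(U)=(A_0-A(U))U_x$, but the argument leans on its divergence form $g(z)_x$ with $g(z)=O(z^2)$ --- the derivative becomes the factor $\xi^2$ in (\ref{R-E36}), i.e.\ $\sigma=1$ in Proposition \ref{prop3.1}, which is what makes the low-frequency kernel $(1+t-\tau)^{-3/2}$ integrable in (\ref{R-E41}); estimating $g'(z)z_x$ directly in $L^1$ would leave you with the non-decaying factor $\|z_x\|_{L^2}$.
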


Note that the embedding $L^1(\mathbb{R})\hookrightarrow \dot{B}^{-1/2}_{2,\infty}(\mathbb{R})$ in Lemma \ref{lem2.3}, as an immediate byproduct of Theorem \ref{thm1.2}, the usual optimal decay estimate of $L^{1}(\mathbb{R})$-$L^{2}(\mathbb{R})$ type is available.

\begin{cor}\label{cor1.1}
Let $U(t,x)=(v,u,z,y)(t,x)$ be the global classical solutions of Theorem \ref{thm1.1}. If further the initial data
$U_{0}\in L^1(\mathbb{R})$ and $\widetilde{I}_{0}:=\|U_{0}\|_{B^{3/2}_{2,1}(\mathbb{R})\cap L^1(\mathbb{R})}$ is sufficiently small,
then
\begin{eqnarray}
\|U\|_{L^2}\lesssim \widetilde{I}_{0}(1+t)^{-\frac{1}{4}}. \label{R-E11}
\end{eqnarray}
\end{cor}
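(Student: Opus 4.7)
The plan is to deduce Corollary \ref{cor1.1} directly from Theorem \ref{thm1.2} by invoking the Besov embedding $L^{1}(\mathbb{R})\hookrightarrow \dot{B}^{-1/2}_{2,\infty}(\mathbb{R})$ stated in Lemma \ref{lem2.3}. No new analysis of the Timoshenko system itself should be required; everything of substance has already been done inside Theorem \ref{thm1.2}.

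First I would note that, by Lemma \ref{lem2.3}, one has the estimate
\begin{equation*}
\|U_{0}\|_{\dot{B}^{-1/2}_{2,\infty}(\mathbb{R})}\lesssim \|U_{0}\|_{L^{1}(\mathbb{R})}.
\end{equation*}
Combining this with the trivial inequality $\|U_{0}\|_{B^{3/2}_{2,1}(\mathbb{R})}\leq \widetilde{I}_{0}$, I obtain
\begin{equation*}
I_{0}=\|U_{0}\|_{B^{3/2}_{2,1}(\mathbb{R})\cap \dot{B}^{-1/2}_{2,\infty}(\mathbb{R})}\lesssim \|U_{0}\|_{B^{3/2}_{2,1}(\mathbb{R})\cap L^{1}(\mathbb{R})}=\widetilde{I}_{0}.
\end{equation*}

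Next, I would observe that if $\widetilde{I}_{0}$ is chosen small enough, then $I_{0}$ is below the smallness threshold required by Theorem \ref{thm1.2} (in particular, $U_{0}\in B^{3/2}_{2,1}(\mathbb{R})\cap \dot{B}^{-1/2}_{2,\infty}(\mathbb{R})$ holds automatically from $U_{0}\in B^{3/2}_{2,1}(\mathbb{R})\cap L^{1}(\mathbb{R})$ via the same embedding). Applying Theorem \ref{thm1.2} then yields
\begin{equation*}
\|U\|_{L^{2}}\lesssim I_{0}(1+t)^{-\frac{1}{4}}\lesssim \widetilde{I}_{0}(1+t)^{-\frac{1}{4}},
\end{equation*}
which is exactly \eqref{R-E11}.

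There is no real obstacle here; the only ingredient beyond Theorem \ref{thm1.2} is the Besov embedding, which is a standard Littlewood--Paley fact recorded in Lemma \ref{lem2.3}. The corollary is therefore genuinely a one-line consequence of the main decay theorem, and its role is to recast the decay statement in the classical $L^{1}$--$L^{2}$ framework rather than the (strictly weaker) $\dot{B}^{-1/2}_{2,\infty}$--$L^{2}$ framework used inside the proof of Theorem \ref{thm1.2}.
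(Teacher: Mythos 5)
Your proposal is correct and follows exactly the paper's own route: the corollary is obtained as an immediate byproduct of Theorem \ref{thm1.2} via the embedding $L^{1}(\mathbb{R})\hookrightarrow \dot{B}^{-1/2}_{2,\infty}(\mathbb{R})$ of Lemma \ref{lem2.3}, which gives $I_{0}\lesssim \widetilde{I}_{0}$ and transfers the smallness hypothesis. Nothing further is needed.
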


\begin{rem}
Let us mention that Theorem \ref{thm1.2} and Corollary \ref{cor1.1} exhibit the optimal decay rate in the Besov space with $s_{c}=3/2$,
that is, $s_{D}=3/2$, which implies that the minimal decay regularity coincides with the the critical regularity for global solutions,
and the extra higher regularity is not necessary. In addition, it is worth noting that the present work opens a door for the study of dissipative systems of regularity-loss type, which encourages us to develop frequency-localization time-decay
inequalities for other dissipative rates and investigate systems with the regularity-loss mechanism.
\end{rem}

Finally, we would like to mention other studies on the dissipative Timoshenko system with different effects, see, e.g.,
\cite{RBS,RFSC} for frictional dissipation case, \cite{FR,SAJM,SK} for thermal dissipation case, and \cite{ABMR,ARMSV,LK,LP} for memory-type dissipation case.

The rest of this paper unfolds as follows. In Sect.\ref{sec:2}, we
present useful properties in Besov spaces, which will be used in the subsequence
analysis. In Sect.\ref{sec:3}, we shall develop new time-decay
inequality with using frequency-localization techniques. Sect.\ref{sec:4} is devoted to construct the global-in-time existence of classical solutions to (\ref{R-E4}). Furthermore, in Sect.\ref{sec:5}, we deduce the optimal decay estimate for (\ref{R-E4})
by employing energy approaches in terms of high-frequency and low-frequency decomposition.
In Appendix (Sect.\ref{sec:6}), we present those definitions for Besov spaces and
Chemin-Lerner spaces for the convenience of reader.

\textbf{Notations.}  Throughout the paper, $f\lesssim g$ denotes $f\leq Cg$, where $C>0$
is a generic constant. $f\thickapprox g$ means $f\lesssim g$ and $g\lesssim f$. Denote by $\mathcal{C}([0,T],X)$ (resp.,
$\mathcal{C}^{1}([0,T],X)$) the space of continuous (resp.,
continuously differentiable) functions on $[0,T]$ with values in a
Banach space $X$. Also, $\|(f,g,h)\|_{X}$ means $
\|f\|_{X}+\|g\|_{X}+\|h\|_{X}$, where $f,g,h\in X$.

\section{Tools}\setcounter{equation}{0}\label{sec:2}
In this section, we only collect useful analysis properties in Besov spaces and Chemin-Lerner spaces in $\mathbb{R}^{n}(n\geq1)$.
For convenience of reader, those definitions for Besov spaces and Chemin-Lerner spaces are given in the Appendix.
Firstly, we give an improved
Bernstein inequality (see, \textit{e.g.}, \cite{W}), which allows the case of fractional derivatives.

\begin{lem}\label{lem2.1}
Let $0<R_{1}<R_{2}$ and $1\leq a\leq b\leq\infty$.
\begin{itemize}
\item [(i)] If $\mathrm{Supp}\mathcal{F}f\subset \{\xi\in \mathbb{R}^{n}: |\xi|\leq
R_{1}\lambda\}$, then
\begin{eqnarray*}
\|\Lambda^{\alpha}f\|_{L^{b}}
\lesssim \lambda^{\alpha+n(\frac{1}{a}-\frac{1}{b})}\|f\|_{L^{a}}, \ \  \mbox{for any}\ \  \alpha\geq0;
\end{eqnarray*}

\item [(ii)]If $\mathrm{Supp}\mathcal{F}f\subset \{\xi\in \mathbb{R}^{n}:
R_{1}\lambda\leq|\xi|\leq R_{2}\lambda\}$, then
\begin{eqnarray*}
\|\Lambda^{\alpha}f\|_{L^{a}}\approx\lambda^{\alpha}\|f\|_{L^{a}}, \ \  \mbox{for any}\ \ \alpha\in\mathbb{R}.
\end{eqnarray*}
\end{itemize}
\end{lem}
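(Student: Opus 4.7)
\bigskip

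\noindent\textbf{Proof proposal for Lemma \ref{lem2.1}.}

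The plan is a standard frequency-localization convolution argument, combined with a rescaling step so that the parameter $\lambda$ appears as an overall power, together with a duality/inverse argument to upgrade the upper bound in (ii) into an equivalence.

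First I would fix a smooth cutoff $\phi\in C_{c}^{\infty}(\mathbb{R}^{n})$ adapted to the support hypothesis: in case (i), $\phi(\xi)=1$ on $|\xi|\leq R_{1}$ and $\phi(\xi)=0$ outside some slightly larger ball, and in case (ii), $\phi(\xi)=1$ on the annulus $R_{1}\leq|\xi|\leq R_{2}$ and supported in a slightly larger annulus \emph{away from the origin}. In either case the spectral support of $f$ makes $\hat f(\xi)=\phi(\xi/\lambda)\hat f(\xi)$, so
\[
\Lambda^{\alpha}f \;=\; \mathcal{F}^{-1}\!\bigl(|\xi|^{\alpha}\phi(\xi/\lambda)\bigr)\ast f \;=:\; g_{\lambda}^{\alpha}\ast f.
\]
A change of variable in the Fourier integral gives the scaling identity $g_{\lambda}^{\alpha}(x)=\lambda^{n+\alpha}g_{1}^{\alpha}(\lambda x)$, and hence $\|g_{\lambda}^{\alpha}\|_{L^{r}}=\lambda^{\alpha+n(1-1/r)}\|g_{1}^{\alpha}\|_{L^{r}}$ for every $r\in[1,\infty]$. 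Young's convolution inequality with exponent relation $1+\tfrac{1}{b}=\tfrac{1}{r}+\tfrac{1}{a}$ then produces exactly the claimed power $\lambda^{\alpha+n(1/a-1/b)}$, provided $\|g_{1}^{\alpha}\|_{L^{r}}<\infty$.

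The content is therefore the integrability of the fixed kernel $g_{1}^{\alpha}$. In case (ii) the symbol $|\xi|^{\alpha}\phi(\xi)$ is a $C^{\infty}$ function with compact support (since $\phi$ vanishes near the origin), so $g_{1}^{\alpha}$ is a Schwartz function and in particular lies in every $L^{r}$. This handles the upper bound in (ii) for arbitrary $\alpha\in\mathbb{R}$; the reverse inequality is then obtained by writing $f=\Lambda^{-\alpha}\Lambda^{\alpha}f$ (where the spectrum of $\Lambda^{\alpha}f$ is still contained in the annulus) and applying the same upper estimate with $\alpha$ replaced by $-\alpha$, giving $\|f\|_{L^{a}}\lesssim\lambda^{-\alpha}\|\Lambda^{\alpha}f\|_{L^{a}}$. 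That delivers the equivalence $\approx\lambda^{\alpha}$.

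In case (i) the symbol $|\xi|^{\alpha}\phi(\xi)$ is only H\"older continuous at the origin when $\alpha$ is not an even integer, so $g_{1}^{\alpha}$ need not be Schwartz. This is the main (and only nontrivial) obstacle. I would handle it by a smooth dyadic decomposition of the symbol away from the origin: write $|\xi|^{\alpha}\phi(\xi)=\sum_{j\leq 0}|\xi|^{\alpha}\varphi(2^{-j}\xi)+r(\xi)$, where $\varphi$ is an annular cutoff and $r$ absorbs the piece supported near the origin. Each annular piece is smooth, yielding a Schwartz kernel with $L^{1}$ norm controlled by $2^{j(\alpha+n)}\cdot 2^{-jn}=2^{j\alpha}$ after rescaling; summing over $j\leq 0$ converges because $\alpha\geq 0$. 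The low-frequency remainder $r$ is bounded with compact support, hence in $L^{1}\cap L^{\infty}$, so its inverse Fourier transform is in $L^{r}$ for all $r$ by interpolation of the standard smooth-kernel bound. Combining yields $g_{1}^{\alpha}\in L^{r}$ for every $r\in[1,\infty]$, completing (i).
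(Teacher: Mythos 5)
The paper does not prove Lemma \ref{lem2.1}; it is quoted as a known ``improved Bernstein inequality'' with a citation to \cite{W}, so there is no in-paper argument to compare against. Your strategy --- write $\Lambda^{\alpha}f=g_{\lambda}^{\alpha}\ast f$ with a cutoff adapted to the spectral support, extract the power of $\lambda$ by the scaling identity $g_{\lambda}^{\alpha}(x)=\lambda^{n+\alpha}g_{1}^{\alpha}(\lambda x)$, apply Young's inequality, and obtain the reverse bound in (ii) by applying the forward bound to $\Lambda^{-\alpha}$ --- is the standard route to this lemma and is sound in outline; part (ii) is complete as written.

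There are, however, two slips in part (i), both repairable. First, your treatment of the remainder $r(\xi)$ is not correct as stated: from ``$r$ is bounded with compact support'' one only gets $\mathcal{F}^{-1}r\in L^{\infty}\cap L^{2}$ (hence $L^{r}$ for $r\geq 2$), not $L^{r}$ for all $r\geq 1$; the indicator of a ball already shows that a bounded compactly supported symbol need not have an integrable kernel. Moreover, if $r$ is meant to carry the piece of $|\xi|^{\alpha}\phi(\xi)$ near the origin, it inherits exactly the non-smoothness you are trying to avoid. The clean fix is to use the full homogeneous partition of unity, $|\xi|^{\alpha}\phi(\xi)=\sum_{j\leq J_{0}}|\xi|^{\alpha}\varphi(2^{-j}\xi)\phi(\xi)$ for $\xi\neq 0$, which leaves no remainder at all. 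Second, the series $\sum_{j\leq 0}2^{j(\alpha+n(1-1/r))}$ converges when $\alpha+n(1-1/r)>0$, which fails precisely at $\alpha=0$, $r=1$ (i.e.\ $a=b$); saying it ``converges because $\alpha\geq 0$'' overstates the case. That endpoint must be handled separately, but it is trivial there: for $\alpha=0$ the symbol $\phi$ is already smooth and compactly supported, so its kernel is Schwartz and Young's inequality applies directly. With these two corrections the argument is complete.
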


Besov spaces obey various inclusion relations. Precisely,
\begin{lem}\label{lem2.2} Let $s\in \mathbb{R}$ and $1\leq
p,r\leq\infty,$ then
\begin{itemize}
\item[(1)]If $s>0$, then $B^{s}_{p,r}=L^{p}\cap \dot{B}^{s}_{p,r};$
\item[(2)]If $\tilde{s}\leq s$, then $B^{s}_{p,r}\hookrightarrow
B^{\tilde{s}}_{p,r}$. This inclusion relation is false for
the homogeneous Besov spaces;
\item[(3)]If $1\leq r\leq\tilde{r}\leq\infty$, then $\dot{B}^{s}_{p,r}\hookrightarrow
\dot{B}^{s}_{p,\tilde{r}}$ and $B^{s}_{p,r}\hookrightarrow
B^{s}_{p,\tilde{r}};$
\item[(4)]If $1\leq p\leq\tilde{p}\leq\infty$, then $\dot{B}^{s}_{p,r}\hookrightarrow \dot{B}^{s-n(\frac{1}{p}-\frac{1}{\tilde{p}})}_{\tilde{p},r}
$ and $B^{s}_{p,r}\hookrightarrow
B^{s-n(\frac{1}{p}-\frac{1}{\tilde{p}})}_{\tilde{p},r}$;
\item[(5)]$\dot{B}^{n/p}_{p,1}\hookrightarrow\mathcal{C}_{0},\ \ B^{n/p}_{p,1}\hookrightarrow\mathcal{C}_{0}(1\leq p<\infty);$
\end{itemize}
where $\mathcal{C}_{0}$ is the space of continuous bounded functions
which decay at infinity.
\end{lem}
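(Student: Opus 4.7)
The plan is to reduce every inclusion to a numerical $\ell^r$ inequality on the dyadic-block sequence $(2^{qs}\|\Delta_q u\|_{L^p})_q$, combined with the Bernstein inequalities of Lemma \ref{lem2.1} to trade regularity for integrability. Parts (2) and (3) are essentially bookkeeping: for (3) the inclusion $\ell^r\hookrightarrow\ell^{\tilde r}$ applied in $q$ is enough; for (2) in the inhomogeneous setting the indices $q\geq -1$ are bounded below, so $2^{q\tilde s}\lesssim 2^{qs}$ with a constant depending on $s-\tilde s\geq 0$, whereas in the homogeneous scale the full range $q\in\mathbb{Z}$ destroys any such uniform bound, which is why the analogous statement fails there.

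For part (4) I apply Lemma \ref{lem2.1}(i) to each block, $\|\Delta_q u\|_{L^{\tilde p}}\lesssim 2^{qn(1/p-1/\tilde p)}\|\Delta_q u\|_{L^p}$, and absorb the gain $2^{qn(1/p-1/\tilde p)}$ into a shift of the regularity index before taking $\ell^r$-norms; the homogeneous version is identical. Part (1) is the only place that requires care. For $B^s_{p,r}\hookrightarrow L^p$, the low-frequency block $S_0 u$ is in $L^p$ by definition, while for $s>0$ the high-frequency series $\sum_{q\geq 0}\Delta_q u$ converges absolutely in $L^p$ upon splitting $\|\Delta_q u\|_{L^p}=2^{-qs}\cdot(2^{qs}\|\Delta_q u\|_{L^p})$ and applying H\"older in $q$ (with the geometric factor $2^{-qs}$ summable because $s>0$). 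For $B^s_{p,r}\hookrightarrow\dot B^s_{p,r}$, the blocks $\dot\Delta_q u$ with $q\geq 0$ coincide with $\Delta_q u$ up to finitely many indices, and for $q<0$ a low-frequency Bernstein estimate gives $\|\dot\Delta_q u\|_{L^p}\lesssim\|u\|_{L^p}$, so the negative-index contribution is controlled by $\|u\|_{L^p}\bigl(\sum_{q<0}2^{qsr}\bigr)^{1/r}$, finite because $s>0$. The converse inclusion $L^p\cap\dot B^s_{p,r}\hookrightarrow B^s_{p,r}$ uses only that $S_0$ is $L^p$-bounded and that $\Delta_q=\dot\Delta_q$ for $q\geq 0$ up to a finite shift.

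Part (5) is obtained by writing $u=\sum_q\Delta_q u$ (or its homogeneous version) and applying Bernstein to each block: $\|\Delta_q u\|_{L^\infty}\lesssim 2^{qn/p}\|\Delta_q u\|_{L^p}$, so the series converges absolutely in $L^\infty$ with norm bounded by $\|u\|_{B^{n/p}_{p,1}}$. Since each block is a convolution against a Schwartz function, hence belongs to $\mathcal{C}_0$, and the uniform limit of $\mathcal{C}_0$-functions lies in $\mathcal{C}_0$, the embedding follows; the restriction $p<\infty$ enters only to identify the distributional limit with a genuine function via density arguments. The main obstacle in the whole argument is the interaction between the homogeneous and inhomogeneous scales at low frequency in part (1), where one must use $s>0$ in two different ways (summability of $2^{-qs}$ at $+\infty$ and of $2^{qs}$ at $-\infty$); everywhere else Bernstein does the work and the rest reduces to organising dyadic sums into $\ell^r$ norms.
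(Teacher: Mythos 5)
Your argument is correct and is the standard one: the paper states this lemma without proof as classical background (deferring to the references in the Appendix), and your reduction of each embedding to an $\ell^r$ inequality on the dyadic blocks, with Bernstein's inequality trading integrability for a shift of the regularity index and with $s>0$ used twice in part (1) exactly as you describe, is the textbook route. One small imprecision: in part (5) the hypothesis $p<\infty$ is not merely a matter of identifying the distributional limit by density --- it is what guarantees that each block $\Delta_q u$, being the convolution of an $L^p$ function with $p<\infty$ against a Schwartz function, actually vanishes at infinity; for $p=\infty$ the embedding into $\mathcal{C}_0$ already fails for constant functions.
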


\begin{lem}\label{lem2.3}
Suppose that $\varrho>0$ and $1\leq p<2$. It holds that
\begin{eqnarray*}
\|f\|_{\dot{B}^{-\varrho}_{r,\infty}}\lesssim \|f\|_{L^{p}}
\end{eqnarray*}
with $1/p-1/r=\varrho/n$. In particular, this holds with $\varrho=n/2, r=2$ and $p=1$.
\end{lem}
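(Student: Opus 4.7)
The plan is to reduce the statement to a one-line application of the annular Bernstein inequality from Lemma \ref{lem2.1}(ii) (together with the mapping version in (i)) applied frequency-by-frequency to the Littlewood--Paley blocks. Given the homogeneous dyadic decomposition $f = \sum_{j\in\mathbb{Z}} \dot\Delta_j f$ (in the sense of tempered distributions modulo polynomials), the definition of the Besov norm yields
\[
\|f\|_{\dot B^{-\varrho}_{r,\infty}} = \sup_{j\in\mathbb{Z}} 2^{-j\varrho}\|\dot\Delta_j f\|_{L^r},
\]
so it suffices to obtain the bound $2^{-j\varrho}\|\dot\Delta_j f\|_{L^r}\lesssim \|f\|_{L^p}$ uniformly in $j$.

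First I would note that since $\varrho > 0$ and $1/p - 1/r = \varrho/n$, we have $p \le r$, so Lemma \ref{lem2.1}(i) with $\alpha=0$ applies to the $L^p\to L^r$ mapping of functions whose Fourier support lies in the annulus $\{|\xi|\approx 2^j\}$; this gives
\[
\|\dot\Delta_j f\|_{L^r} \lesssim 2^{jn(1/p-1/r)}\|\dot\Delta_j f\|_{L^p} = 2^{j\varrho}\|\dot\Delta_j f\|_{L^p}
\]
with an implicit constant independent of $j$. Next, because $\dot\Delta_j$ is the convolution operator with $2^{jn}h(2^j\cdot)$ for a fixed Schwartz function $h\in L^1(\mathbb{R}^n)$, Young's convolution inequality yields $\|\dot\Delta_j f\|_{L^p}\lesssim \|h\|_{L^1}\|f\|_{L^p}\lesssim \|f\|_{L^p}$, again uniformly in $j$ (this is where $1\le p$ is used, and $p<\infty$ keeps things classical; the assumption $p<2$ is compatible with $\varrho>0$ and $r\le\infty$).

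Combining the two estimates gives $2^{-j\varrho}\|\dot\Delta_j f\|_{L^r}\lesssim \|f\|_{L^p}$ for every $j\in\mathbb{Z}$, and taking the supremum concludes $\|f\|_{\dot B^{-\varrho}_{r,\infty}}\lesssim \|f\|_{L^p}$. The specialization $\varrho=n/2$, $r=2$, $p=1$ is then immediate since $1/1-1/2=1/2=(n/2)/n$, yielding the advertised embedding $L^1(\mathbb{R}^n)\hookrightarrow \dot B^{-n/2}_{2,\infty}(\mathbb{R}^n)$ used after Theorem \ref{thm1.2}.

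There is no real obstacle here: the only point that needs minor care is the uniform-in-$j$ bound for the Young step, which relies on the scaling invariance of the $L^1$ norm of the convolution kernel, and the fact that the $\ell^\infty$ structure of the target Besov space sidesteps any summability issues that would otherwise arise from the bi-infinite homogeneous decomposition.
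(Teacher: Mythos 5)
Your proof is correct. The paper states Lemma \ref{lem2.3} without proof, citing it as a standard tool, and your argument is exactly the canonical one: the Bernstein inequality of Lemma \ref{lem2.1}(i) with $\alpha=0$ and $\lambda=2^{j}$ gives $\|\dot\Delta_j f\|_{L^r}\lesssim 2^{jn(1/p-1/r)}\|\dot\Delta_j f\|_{L^p}=2^{j\varrho}\|\dot\Delta_j f\|_{L^p}$, Young's inequality with the rescaled kernel $2^{jn}\check\varphi(2^j\cdot)$ gives the $j$-uniform bound $\|\dot\Delta_j f\|_{L^p}\lesssim\|f\|_{L^p}$, and the $\ell^\infty$ norm in $q$ requires no summation. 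Two cosmetic remarks: you only ever invoke part (i) of Lemma \ref{lem2.1} (the annulus is contained in a ball, so (ii) is not needed), and the hypothesis $p<2$ plays no role in the argument --- it is simply the range in which the lemma is applied later (indeed only $p=1$, $r=2$, $\varrho=n/2$ is used, for the embedding $L^1(\mathbb{R})\hookrightarrow\dot B^{-1/2}_{2,\infty}(\mathbb{R})$ invoked after Theorem \ref{thm1.2} and in \eqref{R-E41}).
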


Moser-type product estimates are stated as follows, which plays an important role in the estimate of bilinear
terms.
\begin{prop}\label{prop2.1}
Let $s>0$ and $1\leq
p,r\leq\infty$. Then $\dot{B}^{s}_{p,r}\cap L^{\infty}$ is an algebra and
$$
\|fg\|_{\dot{B}^{s}_{p,r}}\lesssim \|f\|_{L^{\infty}}\|g\|_{\dot{B}^{s}_{p,r}}+\|g\|_{L^{\infty}}\|f\|_{\dot{B}^{s}_{p,r}}.
$$
Let $s_{1},s_{2}\leq n/p$ such that $s_{1}+s_{2}>n\max\{0,\frac{2}{p}-1\}. $  Then one has
$$\|fg\|_{\dot{B}^{s_{1}+s_{2}-n/p}_{p,1}}\lesssim \|f\|_{\dot{B}^{s_{1}}_{p,1}}\|g\|_{\dot{B}^{s_{2}}_{p,1}}.$$
\end{prop}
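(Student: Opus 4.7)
The plan is to apply Bony's paraproduct decomposition
$$fg \;=\; T_f g + T_g f + R(f,g),$$
where $T_f g = \sum_{q\in\mathbb{Z}} \dot S_{q-1}f\,\dot\Delta_q g$ is the paraproduct and $R(f,g) = \sum_{q\in\mathbb{Z}} \dot\Delta_q f\,\widetilde{\dot\Delta}_q g$ is the remainder (with $\widetilde{\dot\Delta}_q = \dot\Delta_{q-1}+\dot\Delta_q+\dot\Delta_{q+1}$). Each piece is then estimated in the target Besov norm by exploiting its Fourier support and the Bernstein inequality from Lemma \ref{lem2.1}.

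For the first (algebra) estimate, the key observation is that $\dot S_{q-1}f\,\dot\Delta_q g$ is spectrally localized in an annulus of size $2^q$, so when a block $\dot\Delta_j$ is applied to $T_f g$ only indices $q$ with $|q-j|\le N_0$ survive. Using $\|\dot S_{q-1}f\|_{L^\infty}\lesssim\|f\|_{L^\infty}$ and the definition of $\|g\|_{\dot B^{s}_{p,r}}$ yields
$$2^{js}\|\dot\Delta_j(T_f g)\|_{L^p} \;\lesssim\; \|f\|_{L^\infty}\, c_j \|g\|_{\dot B^s_{p,r}},\qquad (c_j)\in\ell^r,$$
and the $\ell^r$-summation gives the bound $\|f\|_{L^\infty}\|g\|_{\dot B^s_{p,r}}$; the term $T_g f$ is symmetric. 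For $R(f,g)$, however, each product $\dot\Delta_q f\,\widetilde{\dot\Delta}_q g$ is supported only in a ball of radius $\sim 2^q$, so after applying $\dot\Delta_j$ one has to sum over all $q\ge j-N_0$. Writing $\|\dot\Delta_q f\,\widetilde{\dot\Delta}_q g\|_{L^p}\lesssim\|f\|_{L^\infty}\|\widetilde{\dot\Delta}_q g\|_{L^p}$ (or symmetrically) and using $s>0$ to absorb the resulting factor $\sum_{q\ge j}2^{(j-q)s}$ as a geometric series converging uniformly in $j$ completes that piece.

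For the sharper second estimate, the same decomposition is used, but Bernstein is applied to trade integrability for a shift of regularity: for $T_f g$, the $L^\infty$ norm of $\dot S_{q-1}f$ is replaced by $\|\dot S_{q-1}f\|_{L^\infty}\lesssim\sum_{q'\le q-2}2^{q'n/p}\|\dot\Delta_{q'}f\|_{L^p}$, which is finite precisely because $s_1\le n/p$ gives the embedding $\dot B^{s_1}_{p,1}\hookrightarrow\dot B^{s_1-n/p}_{\infty,1}$ (and analogously for $T_g f$); summation then produces the target index $s_1+s_2-n/p$. For the remainder, one estimates $\|\dot\Delta_q f\,\widetilde{\dot\Delta}_q g\|_{L^{p/2}}\lesssim\|\dot\Delta_q f\|_{L^p}\|\widetilde{\dot\Delta}_q g\|_{L^p}$ and then uses Bernstein to come back to $L^p$ at cost $2^{qn(2/p-1)_+}$. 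The hypothesis $s_1+s_2 > n\max\{0,2/p-1\}$ is exactly what makes the resulting exponent in $q$ strictly positive, so the tail series converges.

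The main obstacle in both parts is the remainder $R(f,g)$: its frequency support sits in a ball rather than an annulus, so every low-frequency block $\dot\Delta_j$ receives an infinite tail of contributions. Controlling this tail forces the strict positivity $s>0$ in the first statement and the sharp threshold $s_1+s_2>n\max\{0,2/p-1\}$ in the second, both of which are genuine and not merely technical. Once this is handled, assembly in the $\ell^1$ Besov norm and application of Young's/Hölder's inequality in the summation index $q$ finish the proof.
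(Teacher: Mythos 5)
The paper does not prove Proposition \ref{prop2.1}: it is quoted as a standard tool (the kind of Moser/product estimate found in \cite{BCD}), so there is no in-paper argument to compare against. Your Bony-decomposition proof is precisely the standard route and is correct in outline: the paraproducts are handled by the annulus localization plus $\|\dot S_{q-1}f\|_{L^\infty}\lesssim\|f\|_{L^\infty}$ (or, for the second estimate, the Bernstein bound $\|\dot S_{q-1}f\|_{L^\infty}\lesssim 2^{q(n/p-s_1)}\|f\|_{\dot B^{s_1}_{p,1}}$, which is where $s_1\le n/p$ enters), and the remainder's ball-type support is what forces $s>0$, respectively $s_1+s_2>n\max\{0,2/p-1\}$. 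The one place to be slightly more careful is the remainder for $1\le p<2$: rather than passing through the quasi-norm $L^{p/2}$, the cleaner bookkeeping is to apply H\"older with the dual exponent, $\|\dot\Delta_qf\,\widetilde{\dot\Delta}_qg\|_{L^1}\le\|\dot\Delta_qf\|_{L^{p'}}\|\widetilde{\dot\Delta}_qg\|_{L^p}\lesssim 2^{qn(2/p-1)}\|\dot\Delta_qf\|_{L^p}\|\widetilde{\dot\Delta}_qg\|_{L^p}$, and then Bernstein from $L^1$ back to $L^p$ on the block $\dot\Delta_j$; this yields the factor $2^{(j-q)(s_1+s_2-n(2/p-1))}$ whose summability is exactly your stated threshold. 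With that adjustment the argument is complete.
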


In the analysis of decay estimates, we also need the general form of Moser-type product estimates, which was shown by Yong in \cite{Z}.
\begin{prop}\label{prop2.2}
Let $s>0$ and $1\leq p,r,p_{1},p_{2},p_{3},p_{4}\leq\infty$. Assume
that $f\in L^{p_{1}}\cap \dot{B}^{s}_{p_{4},r}$ and $g\in L^{p_{3}}\cap
\dot{B}^{s}_{p_{2},r}$ with
$$\frac{1}{p}=\frac{1}{p_{1}}+\frac{1}{p_{2}}=\frac{1}{p_{3}}+\frac{1}{p_{4}}.$$
Then it holds that
\begin{eqnarray*}
\|fg\|_{\dot{B}^{s}_{p,r}}\lesssim \|f\|_{L^{p_{1}}}\|g\|_{\dot{B}^{s}_{p_{2},r}}+\|g\|_{L^{p_{3}}}\|f\|_{\dot{B}^{s}_{p_{4},r}}.
\end{eqnarray*}
\end{prop}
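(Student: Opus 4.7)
The plan is to split $fg$ via Bony's homogeneous paraproduct decomposition,
$$ fg = T_f g + T_g f + R(f,g), $$
where $T_f g := \sum_q S_{q-1} f\, \dot\Delta_q g$ and $R(f,g) := \sum_q \dot\Delta_q f\, (\dot\Delta_{q-1}+\dot\Delta_q+\dot\Delta_{q+1}) g$, and to estimate each of the three pieces separately in $\dot B^s_{p,r}$ so that the two paraproducts recover the two terms on the right-hand side while the remainder folds into one of them.

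For $T_f g$, I will use that the Fourier support of $S_{q-1}f\,\dot\Delta_q g$ lies in an annulus of size $\sim 2^q$, so only the finitely many indices with $|q-j|\le N$ contribute to $\dot\Delta_j(T_f g)$. H\"older's inequality with $1/p = 1/p_1 + 1/p_2$, together with the uniform bound $\|S_{q-1}f\|_{L^{p_1}}\lesssim \|f\|_{L^{p_1}}$, gives
$$ 2^{js}\|\dot\Delta_j(T_f g)\|_{L^p} \lesssim \|f\|_{L^{p_1}}\sum_{|q-j|\le N} 2^{(j-q)s}\, c_q, \qquad c_q := 2^{qs}\|\dot\Delta_q g\|_{L^{p_2}}. $$
Taking the $\ell^r$-norm in $j$ and using $(c_q)\in\ell^r$ with $\|(c_q)\|_{\ell^r}\lesssim \|g\|_{\dot B^s_{p_2,r}}$ yields $\|T_f g\|_{\dot B^s_{p,r}} \lesssim \|f\|_{L^{p_1}}\|g\|_{\dot B^s_{p_2,r}}$. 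The symmetric argument, using $1/p = 1/p_3 + 1/p_4$ instead, furnishes $\|T_g f\|_{\dot B^s_{p,r}} \lesssim \|g\|_{L^{p_3}}\|f\|_{\dot B^s_{p_4,r}}$.

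The remainder is where the real work lies, and is the step I expect to be the main obstacle. Since the Fourier support of $\dot\Delta_q f\,(\dot\Delta_{q-1}+\dot\Delta_q+\dot\Delta_{q+1})g$ sits in a \emph{ball} of radius $\sim 2^q$, $\dot\Delta_j R(f,g)$ picks up every index $q\ge j-N_0$, not just a near-diagonal strip. H\"older then delivers
$$ 2^{js}\|\dot\Delta_j R(f,g)\|_{L^p} \lesssim \|f\|_{L^{p_1}}\sum_{q\ge j-N_0} 2^{(j-q)s}\bigl(2^{qs}\|(\dot\Delta_{q-1}+\dot\Delta_q+\dot\Delta_{q+1})g\|_{L^{p_2}}\bigr), $$
and one must tame the one-sided geometric tail in $q-j$. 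The hypothesis $s>0$ is exactly what saves the day: it puts $\{2^{-ks}\}_{k\ge 0}$ in $\ell^1$, so Young's convolution inequality for sequences produces $\|R(f,g)\|_{\dot B^s_{p,r}} \lesssim \|f\|_{L^{p_1}}\|g\|_{\dot B^s_{p_2,r}}$ (distributing H\"older the other way via $1/p = 1/p_3 + 1/p_4$ gives the symmetric bound). Summing the three contributions closes the proof, and it is visible that the whole argument would collapse at $s=0$, which is why strict positivity of $s$ cannot be dispensed with.
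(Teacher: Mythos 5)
Your argument is correct. Note, however, that the paper does not actually prove Proposition \ref{prop2.2}: it is quoted as a known product estimate and attributed to the reference \cite{Z}, so there is no in-text proof to compare against. Your Bony-decomposition argument is the standard proof of exactly this statement and all the steps check out: the two paraproducts are handled by the near-diagonal spectral localization in an annulus of size $2^q$ together with H\"older split as $1/p=1/p_1+1/p_2$ (resp.\ $1/p=1/p_3+1/p_4$) and the uniform bounds $\|S_{q-1}f\|_{L^{p_1}}\lesssim\|f\|_{L^{p_1}}$, $\|\dot\Delta_q g\|_{L^{p_2}}\lesssim 2^{-qs}c_q$; and for the remainder you correctly identify that the ball-type spectral support forces a one-sided sum over $q\ge j-N_0$, which is summable precisely because $s>0$ puts the geometric weights in $\ell^1$ so that Young's inequality $\ell^1*\ell^r\to\ell^r$ closes the estimate. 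Your closing remark that the remainder estimate is the only place where $s>0$ is used (the paraproducts being bounded for every real $s$) is also accurate. The only points left implicit are routine: the convergence of the three series in $\mathcal{S}'_0$ and the fact that their sum equals $fg$, which are standard for $f\in L^{p_1}\cap\dot B^s_{p_4,r}$, $g\in L^{p_3}\cap\dot B^s_{p_2,r}$ with $s>0$. In short, your proof is a complete, self-contained justification of a statement the paper only cites.
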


In \cite{XK1}, the first and third authors established a key fact, which indicates the connection between
homogeneous Chemin-Lerner spaces and inhomogeneous Chemin-Lerner spaces.
\begin{prop} \label{prop2.3}
Let $s\in \mathbb{R}$ and $1\leq \theta, p,r\leq\infty$.
\begin{itemize}
\item [$(1)$] It holds that
\begin{eqnarray*}
L^{\theta}_{T}(L^{p})\cap
\widetilde{L}^{\theta}_{T}(\dot{B}^{s}_{p,r})\subset \widetilde{L}^{\theta}_{T}(B^{s}_{p,r});
\end{eqnarray*}
\item [$(2)$] Furthermore, as $s>0$ and $\theta\geq r$, it holds that
\begin{eqnarray*}
L^{\theta}_{T}(L^{p})\cap
\widetilde{L}^{\theta}_{T}(\dot{B}^{s}_{p,r})=\widetilde{L}^{\theta}_{T}(B^{s}_{p,r})
\end{eqnarray*}
\end{itemize}
for any $T>0$.
\end{prop}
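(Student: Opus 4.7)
The plan is to exploit the fact that the inhomogeneous and homogeneous Littlewood--Paley blocks coincide for $j\ge 0$, so the only discrepancy between $\widetilde{L}^\theta_T(B^s_{p,r})$ and $\widetilde{L}^\theta_T(\dot{B}^s_{p,r})$ lives in the low-frequency tail. That tail can be absorbed into the auxiliary space $L^\theta_T(L^p)$, thanks to the $L^p$-boundedness of the smooth Fourier cut-off kernels $\Delta_{-1}$ and $\dot{\Delta}_j$. The two extra assumptions imposed in part (2) each serve a single role: $\theta\ge r$ enables Minkowski's inequality to reverse the order of the $\ell^r$- and $L^\theta_T$-norms, while $s>0$ ensures convergence of a geometric series in $2^{js}$.

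For part (1), I would split the inhomogeneous norm as
\[
\|f\|_{\widetilde{L}^\theta_T(B^s_{p,r})}\sim \|\Delta_{-1}f\|_{L^\theta_T(L^p)}+\Bigl(\sum_{j\ge 0}2^{jsr}\|\Delta_j f\|_{L^\theta_T(L^p)}^{r}\Bigr)^{1/r}.
\]
Since $\Delta_j=\dot{\Delta}_j$ for $j\ge 0$, the high-frequency sum is controlled by $\|f\|_{\widetilde{L}^\theta_T(\dot{B}^s_{p,r})}$, while the low-frequency block is bounded by $\|f\|_{L^\theta_T(L^p)}$ via $L^p$-boundedness of $\Delta_{-1}$. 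Summing the two gives the desired inclusion.

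For part (2), the reverse inclusion decomposes into two embeddings. First, $\widetilde{L}^\theta_T(B^s_{p,r})\hookrightarrow L^\theta_T(L^p)$: because $\theta/r\ge 1$, Minkowski's inequality yields $\widetilde{L}^\theta_T(B^s_{p,r})\hookrightarrow L^\theta_T(B^s_{p,r})$, and then the standard embedding $B^s_{p,r}\hookrightarrow L^p$ for $s>0$ (obtained from $f=\Delta_{-1}f+\sum_{j\ge 0}\Delta_j f$ after H\"older against the summable sequence $2^{-js}$) finishes the step. Second, $\widetilde{L}^\theta_T(B^s_{p,r})\hookrightarrow\widetilde{L}^\theta_T(\dot{B}^s_{p,r})$: high-frequency blocks coincide as before, while for $j\le -1$ the $L^p$-boundedness of $\dot{\Delta}_j$ gives $\|\dot{\Delta}_j f\|_{L^\theta_T(L^p)}\lesssim\|f\|_{L^\theta_T(L^p)}$, hence
\[
\sum_{j\le -1}2^{jsr}\|\dot{\Delta}_j f\|_{L^\theta_T(L^p)}^{r}\le \|f\|_{L^\theta_T(L^p)}^{r}\sum_{j\le -1}2^{jsr}<\infty
\]
thanks to $s>0$; the first embedding bounds the right-hand side by $\|f\|_{\widetilde{L}^\theta_T(B^s_{p,r})}$.

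The proof is essentially structural and I do not expect any serious obstacle. The two subtleties to monitor are that the hypothesis $s>0$ in part (2) is genuinely used twice (for $B^s_{p,r}\hookrightarrow L^p$ and for the convergence of $\sum_{j\le -1}2^{jsr}$), and that $\theta\ge r$ is exactly what orients Minkowski's inequality correctly; the borderline cases $r=\infty$ or $\theta=\infty$ require only the obvious replacement of sums by suprema.
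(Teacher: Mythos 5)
Your argument is correct and is the standard proof of this equivalence: part (1) follows from the $L^p$-boundedness of $\Delta_{-1}$ together with the coincidence $\Delta_j=\dot{\Delta}_j$ for $j\ge 0$, and part (2) from Minkowski's inequality (using $\theta\ge r$), the embedding $B^s_{p,r}\hookrightarrow L^p$, and the summability of $2^{js}$ over $j\le -1$ when $s>0$. The paper itself does not reprove Proposition \ref{prop2.3} but cites \cite{XK1}, where the argument is essentially the one you give, so there is nothing further to reconcile.
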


The property of continuity for product in $\widetilde{L}^{\theta}_{T}(B^{s}_{p,r})$ is similar to in the stationary case (Proposition \ref{prop2.1}),
whereas the time exponent $\theta$ behaves according to the H\"{o}lder inequality.
\begin{prop}\label{prop2.4}
The following inequality holds:
$$
\|fg\|_{\widetilde{L}^{\theta}_{T}(B^{s}_{p,r})}\lesssim
(\|f\|_{L^{\theta_{1}}_{T}(L^{\infty})}\|g\|_{\widetilde{L}^{\theta_{2}}_{T}(B^{s}_{p,r})}
+\|g\|_{L^{\theta_{3}}_{T}(L^{\infty})}\|f\|_{\widetilde{L}^{\theta_{4}}_{T}(B^{s}_{p,r})})
$$
whenever $s>0, 1\leq p\leq\infty,
1\leq\theta,\theta_{1},\theta_{2},\theta_{3},\theta_{4}\leq\infty$
and
$$\frac{1}{\theta}=\frac{1}{\theta_{1}}+\frac{1}{\theta_{2}}=\frac{1}{\theta_{3}}+\frac{1}{\theta_{4}}.$$
As a direct corollary, one has
$$\|fg\|_{\widetilde{L}^{\theta}_{T}(B^{s}_{p,r})}
\lesssim
\|f\|_{\widetilde{L}^{\theta_{1}}_{T}(B^{s}_{p,r})}\|g\|_{\widetilde{L}^{\theta_{2}}_{T}(B^{s}_{p,r})}$$
whenever $s\geq n/p,
\frac{1}{\theta}=\frac{1}{\theta_{1}}+\frac{1}{\theta_{2}}.$
\end{prop}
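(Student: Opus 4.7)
The plan is to adapt the Bony-paraproduct proof of the stationary Moser estimate (Proposition \ref{prop2.1}) to the Chemin-Lerner setting, inserting H\"older's inequality in time at the step where one would otherwise take $L^{p}_{x}$ norms. Write $fg = T_{f}g + T_{g}f + R(f,g)$ with
\[
T_{f}g := \sum_{q}S_{q-1}f\,\Delta_{q}g,\qquad R(f,g):=\sum_{q}\Delta_{q}f\,\widetilde\Delta_{q}g,
\]
where $\widetilde\Delta_{q}:=\Delta_{q-1}+\Delta_{q}+\Delta_{q+1}$, and apply the inhomogeneous Littlewood-Paley block $\Delta_{j}$ to both sides.

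For the paraproduct $T_{f}g$, spectral localization forces $|q-j|\leq N$ for some absolute $N$, so pointwise in $t$ one has
\[
\|\Delta_{j}(T_{f}g)(t)\|_{L^{p}}\lesssim \|f(t)\|_{L^{\infty}}\sum_{|q-j|\leq N}\|\Delta_{q}g(t)\|_{L^{p}}.
\]
Apply H\"older in time with $1/\theta=1/\theta_{1}+1/\theta_{2}$, multiply by $2^{js}$ and take $\ell^{r}_{j}$; a harmless shift of the summation index yields a contribution bounded by $\|f\|_{L^{\theta_{1}}_{T}(L^{\infty})}\|g\|_{\widetilde L^{\theta_{2}}_{T}(B^{s}_{p,r})}$. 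The symmetric term $T_{g}f$ produces the second summand on the right-hand side. For the remainder $R(f,g)$, the spectral support of $\Delta_{q}f\,\widetilde\Delta_{q}g$ is a ball of radius $\lesssim 2^{q}$, so $\Delta_{j}R$ only sees indices $q\geq j-N_{0}$; place the $L^{\infty}$ norm on whichever factor one prefers and write $2^{js}=2^{-(q-j)s}\cdot 2^{qs}$. The condition $s>0$ makes $\sum_{q\geq j-N_{0}}2^{-(q-j)s}$ finite, after which Young's inequality in $\ell^{r}$ closes the sum. The two possible choices of which factor carries $L^{\infty}$ reproduce exactly the two summands already appearing on the right-hand side, completing the main inequality.

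For the algebra corollary, apply the embedding $B^{n/p}_{p,1}\hookrightarrow L^{\infty}$ from Lemma \ref{lem2.2}(5) pointwise in $t$ and then in $L^{\theta_{i}}_{T}$; together with Lemma \ref{lem2.2}(2)--(3) and the hypothesis $s\geq n/p$, this dominates $\|f\|_{L^{\theta_{1}}_{T}(L^{\infty})}$ by $\|f\|_{\widetilde L^{\theta_{1}}_{T}(B^{s}_{p,r})}$ (and similarly for $g$), handling the endpoint $s=n/p$ through $r=1$ and the case $s>n/p$ via an extra H\"older in the $j$-sum with summability coming from the gap $s-n/p>0$. Substituting into the main inequality with $\theta_{3},\theta_{4}$ fixed by the symmetric splitting yields the stated algebra bound. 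The main technical point is to keep the order of the $\ell^{r}_{j}$, $L^{\theta}_{T}$, and $L^{p}_{x}$ norms scrupulously correct: the Chemin-Lerner norm places $\ell^{r}$ \emph{outside} $L^{\theta}_{T}$, so the time H\"older must be applied before summing in $j$, and Young's inequality in the remainder step must be invoked at the $\ell^{r}$ level rather than inside $L^{\theta}_{T}$; otherwise one implicitly uses the (in general false) swap between $L^{\theta}_{T}(B^{s}_{p,r})$ and $\widetilde L^{\theta}_{T}(B^{s}_{p,r})$.
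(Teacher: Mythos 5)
Your proof is correct and follows exactly the route the paper intends: the paper states Proposition \ref{prop2.4} without proof, remarking only that it is the stationary Moser estimate of Proposition \ref{prop2.1} with the time exponent handled by H\"older's inequality, and your Bony decomposition $fg=T_{f}g+T_{g}f+R(f,g)$ with H\"older in $t$ applied blockwise before the $\ell^{r}_{j}$ summation is precisely that standard argument, including the correct use of Young's inequality in $\ell^{r}$ for the remainder under $s>0$. Your caveat that the endpoint $s=n/p$ of the corollary requires $r=1$ is well taken and consistent with every application in the paper, which only invokes it for spaces of the form $B^{s}_{2,1}$.
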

Finally, we state a continuity result for compositions (see
\cite{Abidi}) to end this section.
\begin{prop}\label{prop2.5}
Let $s>0$, $1\leq p, r, \rho\leq \infty$, $F\in
W^{[s]+1,\infty}_{loc}(I;\mathbb{R})$ with $F(0)=0$, $T\in
(0,\infty]$ and $v\in \widetilde{L}^{\rho}_{T}(B^{s}_{p,r})\cap
L^{\infty}_{T}(L^{\infty}).$ Then
$$\|F(v)\|_{\widetilde{L}^{\rho}_{T}(B^{s}_{p,r})}\lesssim
(1+\|v\|_{L^{\infty}_{T}(L^{\infty})})^{[s]+1}\|v\|_{\widetilde{L}^{\rho}_{T}(B^{s}_{p,r})}.$$
\end{prop}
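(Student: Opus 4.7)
The plan is to reduce this Chemin--Lerner composition estimate to the well-known stationary Besov composition estimate, exploiting the fact that $F$ acts pointwise in $t$. The starting point is Meyer's telescoping linearization: using $F(0)=0$ and $v=\sum_{q\ge -1}\Delta_q v$, I would write
$$F(v) \;=\; \sum_{q\ge -1} m_q\,\Delta_q v, \qquad m_q(t,x) \;:=\; \int_0^1 F'\!\bigl((S_q v+\tau\Delta_q v)(t,x)\bigr)\,d\tau,$$
with the convention $S_{-1}v\equiv 0$. By Bernstein's inequality (Lemma \ref{lem2.1}), $\|S_q v+\tau\Delta_q v\|_{L^\infty_{t,x}}\le C\|v\|_{L^\infty_T(L^\infty)}$, so pointwise $|m_q(t,x)|\le \|F'\|_{L^\infty([-M,M])}$ with $M=C\|v\|_{L^\infty_T(L^\infty)}$. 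This uniform bound is what permits the time norm to pass cleanly through the decomposition.

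Next, I apply $\Delta_j$ to the telescoping identity and expand each product $m_q\Delta_q v$ via Bony's paraproduct formula $T_{m_q}\Delta_q v+T_{\Delta_q v}m_q+R(m_q,\Delta_q v)$. Spectral localisation reduces $\Delta_j F(v)$ to the paraproduct terms with $|j-q|\le N$ plus a remainder for $q\ge j-N$. The first paraproduct is estimated by $\|S_{q-N}m_q\|_{L^\infty}\|\Delta_q v\|_{L^p}\lesssim \|F'\|_{L^\infty}\|\Delta_q v\|_{L^p}$; the remaining two are bounded using Moser-type estimates (Proposition \ref{prop2.1}) and therefore require control of $\|m_q\|_{\dot B^s_{p,r}}$, which I would obtain by iterating the composition estimate at regularity level $s$ for $F'\in W^{[s],\infty}_{loc}$. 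This yields the polynomial factor $(1+\|v\|_{L^\infty_T(L^\infty)})^{[s]+1}$, with the power $[s]+1$ coming naturally from the $[s]$ derivatives required on $F'$ plus one power from $F'$ itself.

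Having estimated $\|\Delta_j F(v)(t)\|_{L^p}$ by a $j$-convolution involving $\|\Delta_q v(t)\|_{L^p}$ weighted by constants depending only on $\|v\|_{L^\infty_T(L^\infty)}$, I take $L^\rho_T$ and then $\ell^r(j;2^{js})$. Because the $m_q$ factors are bounded uniformly in $(t,x)$, the $L^\rho_T$ norm lands on $\|\Delta_q v\|_{L^\rho_T(L^p)}$, i.e.\ precisely the dyadic blocks of $\|v\|_{\widetilde L^\rho_T(B^s_{p,r})}$. A standard discrete Young-type convolution inequality then closes the $\ell^r$ summation.

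The main obstacle is the bookkeeping in controlling $\|m_q\|_{\widetilde L^\infty_T(B^s_{p,r})}$ uniformly in $q$: a careful induction on $[s]$ is needed, with base case $0<s\le 1$ handled by a first-order Taylor inequality $\|F(v)-F(w)\|_{L^p}\le \|F'\|_{L^\infty}\|v-w\|_{L^p}$ combined with the characterisation of Besov norms via finite differences, and inductive step using the chain rule together with Proposition \ref{prop2.4} applied to the Chemin--Lerner product. Throughout, one must verify that the time exponent $\rho$ (which only plays a role through H\"older's inequality at the final step) never interacts with the nonlinear composition, a fact that hinges precisely on the pointwise-in-$t$ action of $F$.
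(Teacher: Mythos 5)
The paper itself offers no proof of Proposition~\ref{prop2.5}: it is quoted from \cite{Abidi}, so your attempt can only be measured against the standard literature argument. Your skeleton is the right one and matches it: Meyer's first linearization $F(v)=\sum_{q\ge-1}m_q\Delta_qv$ with $m_q=\int_0^1F'\bigl((S_qv+\tau\Delta_qv)(t,x)\bigr)d\tau$, the uniform bound $\|m_q\|_{L^\infty_{t,x}}\le\|F'\|_{L^\infty([-M,M])}$, and, crucially for the Chemin--Lerner version, the observation that $F$ acts pointwise in $t$ so that every coefficient is bounded uniformly in $t$ and the $L^\rho_T$ norm lands directly on the blocks $\|\Delta_qv\|_{L^\rho_T(L^p)}$; the final discrete convolution in $j$ is then routine. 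That last observation is genuinely the whole content of the upgrade from $B^s_{p,r}$ to $\widetilde L^\rho_T(B^s_{p,r})$, and you identify it correctly.

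The middle of the argument, however, has a genuine gap. To treat the range $q<j-N$ you invoke Bony's decomposition of $m_q\Delta_qv$ and propose to control $\|m_q\|_{\dot B^s_{p,r}}$ ``by iterating the composition estimate at regularity level $s$ for $F'\in W^{[s],\infty}_{loc}$''. This is circular in a way that does not close: the estimate at level $s$ is exactly what is being proved and it consumes $[s]+1$ derivatives of the outer function, while $F'$ has only $[s]$ left; each iteration at fixed $s$ loses one derivative of $F$, so the hypothesis $F\in W^{[s]+1,\infty}_{loc}$ is exhausted before the recursion terminates. (Your separate induction on $[s]$ lowers the regularity index, not the smoothness demanded of $F'$, so it does not rescue this step.) In addition, the claimed spectral reduction to $|j-q|\le N$ is false for the term $T_{\Delta_qv}m_q$, whose spectrum is dictated by the high frequencies of $m_q$ rather than by $2^q$; summing its Moser-type bound over $q$ produces either $\sum_q\|\Delta_qv\|_{L^\infty}$ (a $B^0_{\infty,1}$ norm not controlled by $L^\infty\cap B^s_{p,r}$) or a $B^s_{\infty,r}$ norm of $m_q$ that you have not established. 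The standard repair dispenses with Bony altogether: for $q<j-N$ write $\|\Delta_j(m_q\Delta_qv)\|_{L^p}\lesssim2^{-jk}\|D^k(m_q\Delta_qv)\|_{L^p}$ with $k>s$ and bound $\|D^lm_q(t)\|_{L^\infty_x}\lesssim2^{ql}(1+\|v\|_{L^\infty_T(L^\infty)})^{l}\sup_{1\le i\le l}\|F^{(1+i)}\|_{L^\infty([-M,M])}$ by Fa\`a di Bruno and Bernstein applied to $S_qv+\tau\Delta_qv$; this is where the factor $(1+\|v\|_{L^\infty_T(L^\infty)})^{[s]+1}$ actually originates, and it keeps every coefficient in $L^\infty_{t,x}$ so the time norm passes through as you intend. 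Note that even this costs $F^{([s]+2)}$ if one takes $k=[s]+1$ bluntly; obtaining the stated $W^{[s]+1,\infty}_{loc}$ hypothesis requires the finite-difference characterisation you mention for $0<s<1$ together with a chain-rule induction in which the product $F'(v)\,Dv$ must be handled by paraproduct estimates (using $\|Dv\|_{B^{-1}_{\infty,\infty}}\lesssim\|v\|_{L^\infty}$), since $Dv\notin L^\infty$ and Proposition~\ref{prop2.4} is not applicable there. As written, the proposal does not carry out either of these repairs.
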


\section{Frequency-localization time-decay inequality}\setcounter{equation}{0}\label{sec:3}
In the recent decade, harmonic analysis tools, especially for techniques based on Littlewood-Paley decomposition and paradifferential calculus
have proved to be very efficient in the study of partial differential equations. It is well-known that the frequency-localization operator $\dot{\Delta}_{q}f$ (or $\Delta_{q}f$ )
has a smoothing effect on the function $f$, even though $f$ is quite rough. Moreover, the $L^p$ norm of $\dot{\Delta}_{q}f$ can be preserved provided $f\in L^p(\mathbb{R}^{n})$. To the best of our knowledge, so far there are few efforts about the decay property related to the operator $\dot{\Delta}_{q}f$. Here, the difficulty of regularity-loss mechanism forces us to develop the frequency-localization time-decay inequality. Precisely,

\begin{prop}\label{prop3.1} Set $\eta(\xi)=\frac{\mid\xi\mid^{2}}{(1+\mid\xi\mid^{2})^{2}}$. If $f\in \dot{B}^{\sigma+\ell}_{2,r}(\mathbb{R}^{n})\cap \dot{B}^{-s}_{2,\infty}(\mathbb{R}^{n})$ for $\sigma\in \mathbb{R}, s\in \mathbb{R}$ and $1\leq r\leq\infty$ such that $\sigma+s>0$, then it holds that
\begin{eqnarray}
&&\Big\|2^{q\sigma}\|\widehat{\dot{\Delta}_{q}f}e^{-\eta(\xi)t}\|_{L^{2}}\Big\|_{l^{r}_{q}}\nonumber\\ &\lesssim & \underbrace{(1+t)^{-\frac{\sigma+s}{2}}\|f\|_{\dot{B}^{-s}_{2,\infty}}}_{Low-frequency\  Estimate}
+\underbrace{(1+t)^{-\frac{\ell}{2}+\frac{n}{2}(\frac{1}{p}-\frac{1}{2})}\|f\|_{\dot{B}^{\sigma+\ell}_{p,r}}}_{High-frequency\  Estimate}, \label{R-E12}
\end{eqnarray}
for $\ell>n(\frac{1}{p}-\frac{1}{2})$\ \footnote{Let us remark that $\ell\geq0$ in the case of $p=2$.} with $1\leq p\leq2$.
\end{prop}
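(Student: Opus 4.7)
The plan is to decompose the $l^r_q$ norm into a low-frequency part $q\leq 0$ and a high-frequency part $q\geq 1$, since the dissipation symbol $\eta(\xi)=|\xi|^2/(1+|\xi|^2)^2$ behaves like $|\xi|^2$ near zero and like $|\xi|^{-2}$ at infinity; correspondingly, on the support $\{|\xi|\thickapprox 2^q\}$ of $\widehat{\dot{\Delta}_q f}$ one has $\eta(\xi)\gtrsim 2^{2q}$ in the low regime and $\eta(\xi)\gtrsim 2^{-2q}$ in the high regime. Plancherel then gives pointwise bounds on $\|\widehat{\dot{\Delta}_q f}e^{-\eta(\xi)t}\|_{L^2}$ into which the two Besov norms may be inserted separately.

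For $q\leq 0$, one uses $\|\dot{\Delta}_q f\|_{L^2}\leq 2^{qs}\|f\|_{\dot{B}^{-s}_{2,\infty}}$ to obtain the typical term $2^{q(\sigma+s)}e^{-c\,2^{2q}t}\|f\|_{\dot{B}^{-s}_{2,\infty}}$. Using $\sigma+s>0$ and a dyadic split at $q_0\thickapprox -\tfrac{1}{2}\log_2 t$ (for $t\geq 1$), one finds that both the tail $q\leq q_0$ (a geometric sum dominated by $2^{q_0(\sigma+s)}$) and the tail $q_0<q\leq 0$ (super-exponentially small after the substitution $m=q-q_0$) are bounded by $t^{-(\sigma+s)/2}$ in $l^r_q$; for $t\leq 1$ the trivial geometric bound suffices. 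This yields the low-frequency summand $(1+t)^{-(\sigma+s)/2}\|f\|_{\dot{B}^{-s}_{2,\infty}}$.

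For $q\geq 1$, Bernstein's inequality (Lemma \ref{lem2.1}) converts $L^p$ into $L^2$ at the cost of $2^{qn(1/p-1/2)}$, which combined with the notation $c_q:=2^{q(\sigma+\ell)}\|\dot{\Delta}_q f\|_{L^p}$ (so $\|c_q\|_{l^r}=\|f\|_{\dot{B}^{\sigma+\ell}_{p,r}}$) yields
\[ 2^{q\sigma}\|\widehat{\dot{\Delta}_q f}e^{-\eta(\xi)t}\|_{L^2}\lesssim c_q\cdot 2^{-q\alpha}e^{-c\,2^{-2q}t},\qquad \alpha:=\ell-n(1/p-1/2)>0. \]
The crucial device is to trade the exponential for its polynomial surrogate $e^{-y}\lesssim(1+y)^{-\ell/2}$ and reduce the matter to
\[ \sup_{q\geq 1}\,2^{-q\alpha}\bigl(1+c\,2^{-2q}t\bigr)^{-\ell/2}\lesssim (1+t)^{-\alpha/2}. \]
Via the substitution $u=2^{-2q}t$, each term takes the form $t^{-\alpha/2}\phi(u)$ with $\phi(u):=u^{\alpha/2}(1+u)^{-\ell/2}$, and $\phi$ is bounded on $(0,\infty)$ since $0<\alpha\leq\ell$; for $t\leq 1$ one trivially has $2^{-q\alpha}\leq 2^{-\alpha}$. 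The crude bound $\|c_q A_q\|_{l^r}\leq\|c_q\|_{l^r}\|A_q\|_{l^\infty}$ then furnishes the high-frequency contribution $(1+t)^{-\ell/2+n(1/p-1/2)/2}\|f\|_{\dot{B}^{\sigma+\ell}_{p,r}}$.

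The main obstacle lies in this high-frequency step. The damping $e^{-c\,2^{-2q}t}$ tends to $1$ as $q\to\infty$, so no uniform $t$-decay is ever available at high frequencies -- this is precisely the regularity-loss mechanism of $\eta$. The only way to manufacture a genuine algebraic decay is to pay a factor $2^{q\ell}$ through the polynomial surrogate of the exponential; this cost is absorbed by the extra regularity $\ell$ in the index $\sigma+\ell$, while the Bernstein passage from $L^p$ to $L^2$ explains the rate correction $n(1/p-1/2)/2$. The assumption $\ell>n(1/p-1/2)$ (equivalently $\alpha>0$) is exactly what guarantees a net gain in $q$ and hence the finiteness of $\sup\phi$; the freedom to choose $p\in[1,2]$ is what makes Proposition \ref{prop3.1} a flexible tool in the time-weighted energy analysis of Section \ref{sec:5}.
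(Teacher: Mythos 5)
Your argument is correct and reaches the stated rates under exactly the stated hypotheses. The low-frequency half is essentially the paper's own proof: both of you reduce to the uniform $l^r_q$-boundedness of $(2^{q}\sqrt{t})^{\sigma+s}e^{-c(2^{q}\sqrt{t})^{2}}$ (the paper's (\ref{R-E17})--(\ref{R-E18})), which you merely verify more explicitly via the dyadic split at $2^{2q_{0}}t\approx 1$. The high-frequency half is where you genuinely diverge. The paper keeps the symbol $e^{-c_{0}t|\xi|^{-2}}$ as a function of $\xi$, applies H\"older on $\{|\xi|\geq 1\}$ with $\frac{1}{p'}+\frac{1}{s}=\frac{1}{2}$ followed by Hausdorff--Young, and then invokes an explicit change-of-variables computation from \cite{XMK} of $\big\|e^{-c_{1}t|\xi|^{-2}}|\xi|^{-\ell}\big\|_{L^{s}(|\xi|\geq 1)}\lesssim (1+t)^{-\frac{\ell}{2}+\frac{n}{2}(\frac{1}{p}-\frac{1}{2})}$, whose convergence requirement $\ell s>n$ is precisely $\ell>n(\frac{1}{p}-\frac{1}{2})$. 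You instead freeze $\eta(\xi)\approx 2^{-2q}$ on each annulus, pass from $L^{p}$ to $L^{2}$ by Bernstein at the cost of $2^{qn(\frac{1}{p}-\frac{1}{2})}$, and optimize the scalar weights $2^{-q\alpha}e^{-c2^{-2q}t}$ over $q$ via the surrogate $e^{-y}\lesssim (1+y)^{-\ell/2}$; your constraint $\alpha=\ell-n(\frac{1}{p}-\frac{1}{2})>0$ enters as boundedness of $u^{\alpha/2}(1+u)^{-\ell/2}$ near $u=0$, which is the same asymptotic regime $|\xi|\to\infty$ where the paper needs integrability. The two routes are equivalent in strength (your $l^{r}$--$l^{\infty}$ H\"older step recovers the blockwise bound (\ref{R-E15}) with a $q$-uniform time factor), but yours is more self-contained, replacing the quoted integral estimate (\ref{R-E14}) by an elementary supremum, at the price of using Bernstein in place of Hausdorff--Young.
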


\begin{proof}
For clarity, the proof is separated into high-frequency and low-frequency parts.

(1) If $q\geq 0$, then $|\xi|\sim2^{q}\geq1$, which leads to
\begin{eqnarray}
\|\widehat{\dot{\Delta}_{q}f}e^{-\eta(\xi)t}\|_{L^2}&\leq&\|\widehat{\dot{\Delta}_{q}f}e^{-c_{0}t|\xi|^{-2}}\|_{L^2(|\xi|\geq 1)}\nonumber\\&=&
\Big\||\xi|^{\ell}|\widehat{\dot{\Delta}_{q}f}|\frac{e^{-c_{0}t|\xi|^{-2}}}{|\xi|^{\ell}}\Big\|_{L^2(|\xi|\geq 1)}\nonumber\\&\leq&
\||\xi|^{\ell}\widehat{\dot{\Delta}_{q}f}\|_{L^{p'}}\Big\|\frac{e^{-c_{0}t|\xi|^{-2}}}{|\xi|^{\ell}}\Big\|_{L^{s}(|\xi|\geq 1)}
 \ \ \Big(\frac{1}{p'}+\frac{1}{s}=\frac{1}{2},\ p'\geq2 \Big)\nonumber\\&\leq&
 2^{q\ell}\|\dot{\Delta}_{q}f\|_{L^{p}}\Big\|\frac{e^{-c_{1}t|\xi|^{-2}}}{|\xi|^{\ell}}\Big\|_{L^{s}(|\xi|\geq 1)}\ \  \Big(\frac{1}{p}+\frac{1}{p'}=1\Big), \label{R-E13}
\end{eqnarray}
where $c_{1}>0$ and the Hausdorff-Young's inequality was used  in the last line. By performing the change of variable as in \cite{XMK}, we arrive at
\begin{eqnarray}
\Big\|\frac{e^{-c_{1}t|\xi|^{-2}}}{|\xi|^{\ell}}\Big\|_{L^{s}(|\xi|\geq 1)}\lesssim (1+t)^{-\frac{\ell}{2}+\frac{n}{2}(\frac{1}{p}-\frac{1}{2})} \label{R-E14}
\end{eqnarray}
for $\ell>n(\frac{1}{p}-\frac{1}{2})$. Besides, it can be also bounded by $(1+t)^{-\frac{\ell}{2}}$
for $\ell\geq0$ if $p=2$. Then it follows from (\ref{R-E13})-(\ref{R-E14}) that
\begin{eqnarray}
2^{q\sigma}\|\widehat{\dot{\Delta}_{q}f}e^{-\eta(\xi)t}\|_{L^2}\lesssim 2^{q(\sigma+\ell)}(1+t)^{-\frac{\ell}{2}+\frac{n}{2}(\frac{1}{p}-\frac{1}{2})}\|\dot{\Delta}_{q}f\|_{L^{p}}. \label{R-E15}
\end{eqnarray}

(2) If $q<0$, then $|\xi|\sim2^{q}<1$, which implies that
\begin{eqnarray}
|\widehat{\dot{\Delta}_{q}f}|e^{-\eta(\xi)t}\leq |\widehat{\dot{\Delta}_{q}f}|e^{-c_{2}t|\xi|^2}\lesssim |\widehat{\dot{\Delta}_{q}f}|e^{-c_{2}(2^{q}\sqrt{t})^2} \label{R-E16}
\end{eqnarray}
for $c_{2}>0$. Furthermore, we can obtain
\begin{eqnarray}
2^{q\sigma}\|\widehat{\dot{\Delta}_{q}f}e^{-\eta(\xi)t}\|_{L^{2}}
\lesssim\|f\|_{\dot{B}^{-s}_{2,\infty}}(1+t)^{-\frac{\sigma+s}{2}}[(2^{q}\sqrt{t})^{\sigma+s}e^{-c_{2}(2^{q}\sqrt{t})^2}] \label{R-E17}
\end{eqnarray}
for $\sigma\in \mathbb{R}, s\in \mathbb{R}$ such that $\sigma+s>0$. Note that
\begin{eqnarray}
\Big\|(2^{q}\sqrt{t})^{\sigma+s}e^{-c_{2}(2^{q}\sqrt{t})^2}\Big\|_{l^{r}_{q}}\lesssim 1, \label{R-E18}
\end{eqnarray}
for any $r\in [1,+\infty]$. Combining (\ref{R-E15}),(\ref{R-E17})-(\ref{R-E18}), we conclude that
\begin{eqnarray}
&&\Big\|2^{q\sigma}\|\widehat{\dot{\Delta}_{q}f}e^{-\eta(\xi)t}\|_{L^{2}}\Big\|_{l^{r}_{q}}\nonumber\\ &\lesssim & \|f\|_{\dot{B}^{-s}_{2,\infty}}(1+t)^{-\frac{\sigma+s}{2}}
+\|f\|_{\dot{B}^{\sigma+\ell}_{p,r}}(1+t)^{-\frac{\ell}{2}+\frac{n}{2}(\frac{1}{p}-\frac{1}{2})}, \label{R-E19}
\end{eqnarray}
which is just the inequality (\ref{R-E12}).
\end{proof}

\section{Global-in-time existence}\setcounter{equation}{0}\label{sec:4}
As shown by \cite{MXK}, the recent local existence theory in \cite{XK1} for generally symmetric hyperbolic systems
can be applied to (\ref{R-E4}) directly.
\begin{prop}\label{prop4.1} (\cite{MXK}) Assume that
$U_{0}\in{B^{3/2}_{2,1}}$, then there exists a time
$T_{0}>0$ (depending only on the initial data) such that
\begin{itemize}
\item[(i)] (Existence):
system (\ref{R-E4}) has a unique solution
$U(t,x)\in\mathcal{C}^{1}([0,T_{0}]\times \mathbb{R})$ satisfying
$U\in\widetilde{\mathcal{C}}_{T_{0}}(B^{3/2}_{2,1})\cap
\widetilde{\mathcal{C}}^1_{T_{0}}(B^{1/2}_{2,1})$;
\item[(ii)] (Blow-up criterion): if the maximal time $T^{*}(>T_{0})$ of existence of such a solution
is finite, then
$$\limsup_{t\rightarrow T^{*}}\|U(t,\cdot)\|_{B^{3/2}_{2,1}}=\infty$$
if and only if $$\int^{T^{*}}_{0}\|\nabla
U(t,\cdot)\|_{L^{\infty}}dt=\infty.$$
\end{itemize}
\end{prop}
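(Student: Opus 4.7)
My plan is to follow the classical iteration scheme adapted to critical Besov spaces, essentially the strategy of \cite{XK1} for symmetric hyperbolic systems, but carefully tracking the contribution of the nonsymmetric dissipation $L$. First I would construct a sequence of approximate solutions $\{U^n\}$ by linearization: set $U^0\equiv U_0$ and define $U^{n+1}$ as the solution to the linear Cauchy problem $\partial_t U^{n+1}+A(U^n)\partial_x U^{n+1}+LU^{n+1}=0$ with $U^{n+1}|_{t=0}=U_0$. Since $A(U^n)$ is real symmetrizable and smooth, each linearized problem is well-posed in Sobolev spaces by classical theory, so every $U^{n+1}$ is well-defined.

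The heart of the matter is a uniform-in-$n$ bound in $\widetilde{L}^\infty_T(B^{3/2}_{2,1})$. Applying the inhomogeneous dyadic block $\Delta_q$ to the linearized equation yields
$$\partial_t\Delta_q U^{n+1}+A(U^n)\partial_x\Delta_q U^{n+1}+L\Delta_q U^{n+1}=-[\Delta_q,A(U^n)]\partial_x U^{n+1}.$$
Taking the $L^2$ inner product with $A_0(U^n)\Delta_q U^{n+1}$, where $A_0(\cdot)$ is the positive-definite symmetrizer of $A(\cdot)$, and integrating by parts, the transport term produces only a lower-order remainder involving $\partial_x A_0(U^n)$. The term $\langle A_0(U^n) L \Delta_q U^{n+1},\Delta_q U^{n+1}\rangle$ is non-negative up to a harmless error because $L$ is non-negative (symmetry is not needed here, only the absence of an antidissipative sign after the symmetrizer is applied, which is checked directly from the explicit form of $L$). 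The commutator on the right is handled by a standard Moser-type commutator estimate in Besov spaces. After multiplying by $2^{3q/2}$, summing in $\ell^1_q$, and applying Gronwall, one obtains a uniform bound on $\|U^{n+1}\|_{\widetilde{L}^\infty_T(B^{3/2}_{2,1})}$ for some $T_0>0$ depending only on $\|U_0\|_{B^{3/2}_{2,1}}$.

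Next I would show that $\{U^n\}$ is Cauchy in the weaker space $\widetilde{\mathcal{C}}_{T_0}(B^{1/2}_{2,1})$. The difference $\delta U^n:=U^{n+1}-U^n$ satisfies a linearized equation with forcing $-(A(U^n)-A(U^{n-1}))\partial_x U^n$; applying Propositions \ref{prop2.4}--\ref{prop2.5} and an energy estimate at the $B^{1/2}_{2,1}$ level yields a contraction once $T_0$ is taken small. Passage to the limit, together with a Fatou-type argument in Besov spaces, produces a solution $U\in\widetilde{L}^\infty_{T_0}(B^{3/2}_{2,1})$ with $\partial_t U\in\widetilde{L}^\infty_{T_0}(B^{1/2}_{2,1})$; time continuity in the critical norm follows from a Friedrichs-mollifier argument, and uniqueness is a routine consequence of the $B^{1/2}_{2,1}$ energy estimate. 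The embedding $B^{1/2}_{2,1}(\mathbb{R})\hookrightarrow L^\infty$ then upgrades $U$ to a classical $\mathcal{C}^1$ solution.

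For the blow-up criterion I would argue by contradiction. Assume $T^*<\infty$ and $\int_0^{T^*}\|\nabla U\|_{L^\infty}\,dt<\infty$. Revisiting the $B^{3/2}_{2,1}$ energy estimate with $U^n$ replaced by the actual solution $U$, the Gronwall coefficient is precisely $\|\nabla A(U)\|_{L^\infty}\lesssim\|\nabla U\|_{L^\infty}$ thanks to the composition estimate, so $\|U(t)\|_{B^{3/2}_{2,1}}$ stays bounded up to $T^*$; applying the local result again at time near $T^*$ extends the solution past $T^*$, contradicting maximality. The converse is immediate from $B^{1/2}_{2,1}\hookrightarrow L^\infty$. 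The main technical obstacle I expect is the commutator estimate at the critical index $s=3/2$, where the bound is sharp and one has to exploit the inhomogeneous block structure (together with Bernstein's inequality, Lemma \ref{lem2.1}) to absorb the low-frequency part; this is exactly where the distinction between homogeneous and inhomogeneous Chemin-Lerner spaces in Proposition \ref{prop2.3} becomes crucial.
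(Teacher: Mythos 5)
Your proposal is correct and is essentially the same argument as the paper's: the paper offers no proof of Proposition \ref{prop4.1}, simply invoking the local existence theory of \cite{XK1} (via \cite{MXK}) for symmetrizable hyperbolic systems, and that theory is built on exactly the scheme you describe --- linearized iterates, symmetrizer energy estimates on inhomogeneous dyadic blocks with Moser-type commutator bounds at the critical index, contraction in the lower norm $B^{1/2}_{2,1}$, and a Gronwall argument driven by $\|\nabla U\|_{L^\infty}$ for the blow-up criterion. The zeroth-order term $L$ is indeed harmless for the local theory (boundedness alone suffices, non-negativity is a bonus), so nothing in your outline breaks.
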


Furthermore, in order to show that classical solutions in Proposition \ref{prop3.1} are globally defined,
we need to construct a priori estimates according to the dissipative mechanism produced by the Tomoshenko system.
For this purpose, define by $E(T)$ the energy functional and by $D(T)$ the corresponding dissipation functional:
$$E(T):=\|U\|_{\widetilde{L}^\infty_{T}(B^{3/2}_{2,1})}$$
and
$$D(T):=\|y\|_{\widetilde{L}^2_{T}(B^{3/2}_{2,1})}+\|(v,z_{x})\|_{\widetilde{L}^2_{T}(B^{1/2}_{2,1})}
+\|u_{x}\|_{\widetilde{L}^2_{T}(B^{-1/2}_{2,1})}$$
for any time $T>0$. Hence, we have the following

\begin{prop}\label{prop4.2} Suppose $U\in\widetilde{\mathcal{C}}_{T}(B^{3/2}_{2,1})\cap
\widetilde{\mathcal{C}}^1_{T}(B^{1/2}_{2,1})$ is a solution of
(\ref{R-E4}) for $T>0$. There exists $\delta_{1}>0$ such that if $E(T)\leq\delta_{1}, $
then
\begin{eqnarray}
&&E(T)+D(T)\lesssim \|U_{0}\|_{B^{3/2}_{2,1}}+\Big(\sqrt{E(T)}+E(T)\Big)D(T). \label{R-E20}
\end{eqnarray}
Furthermore, it holds that
\begin{eqnarray}
&&E(T)+D(T)\lesssim \|U_{0}\|_{B^{3/2}_{2,1}}. \label{R-E21}
\end{eqnarray}
\end{prop}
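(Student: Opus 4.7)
My plan is to localize (\ref{R-E4}) using inhomogeneous Littlewood--Paley blocks $\Delta_q$ and construct a block-level Lyapunov functional whose frequency weights reflect the $\eta_2(\xi)=|\xi|^2/(1+|\xi|^2)^2$ regularity-loss mechanism specific to $a\neq 1$. Since $A(U)$ is symmetrizable, there is a symmetrizer $A_0(U)$ close to the identity for small $U$, and pairing $\Delta_q$ applied to (\ref{R-E4}) with $A_0(U)\Delta_q U$ in $L^2$ yields
$$\tfrac{1}{2}\tfrac{d}{dt}\int A_0(U)\Delta_q U\cdot\Delta_q U\,dx+\int A_0(U)L\Delta_q U\cdot\Delta_q U\,dx=\mathcal{R}_q,$$
where $\mathcal{R}_q$ gathers commutator and transport remainders. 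The matrix $L$ is coercive only on the $y$-component, so this step alone produces the dissipation $\gamma\|\Delta_q y\|^2_{L^2}$; the dissipations of $v$, $z_x$ and $u_x$ have to be recovered through auxiliary couplings.

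To recover these, I adapt the construction of \cite{MXK}: add to the basic energy suitably weighted Fourier-side cross products between $(v,u_x)$, $(z_x,y)$, $(v,y)$ and $(u_x,y)$ with multipliers of the form $\kappa_j(\xi)\approx (1+|\xi|^2)^{-1}$, and eliminate time derivatives using the linearized relations $v_t=u_x-y$, $u_t=v_x$, $z_t=ay_x$, $y_t=az_x+v-\gamma y$. These cross terms supply the algebraic dissipations needed for $v,z_x,u_x$, but weighted by an extra $(1+|\xi|^2)^{-1}$ at high frequency---precisely the one-regularity-loss signature of $a\neq 1$. Crucially, localizing with inhomogeneous rather than homogeneous blocks places the low-frequency part of $u_x$ directly in $\widetilde{L}^2_T(B^{-1/2}_{2,1})$, so that Proposition \ref{prop2.3} is not needed to reconcile the two scales. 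Choosing the coupling coefficient small enough, the resulting functional $\mathcal{L}_q[U]\approx \|\Delta_q U\|^2_{L^2}$ satisfies, uniformly in $q\geq -1$,
$$\tfrac{d}{dt}\mathcal{L}_q+c\bigl(2^{3q}\|\Delta_q y\|^2_{L^2}+2^{q}\|\Delta_q(v,z_x)\|^2_{L^2}+2^{-q}\|\Delta_q u_x\|^2_{L^2}\bigr)\lesssim |\mathcal{R}_q|+|\mathcal{N}_q|,$$
where $\mathcal{N}_q$ collects the genuine nonlinear contributions.

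Integrating in $t\in[0,T]$, taking square roots, multiplying by $2^{3q/2}$ and summing in $\ell^1_q$ produces
$$E(T)+D(T)\lesssim \|U_0\|_{B^{3/2}_{2,1}}+\sum_{q\geq -1}2^{3q/2}\bigl(\|\mathcal{R}_q\|_{L^1_T}+\|\mathcal{N}_q\|_{L^1_T}\bigr)^{1/2}.$$
The remaining task is to bound the sum by $(\sqrt{E(T)}+E(T))D(T)$. Only $\sigma'(z/a)/a$ is non-constant in $A(U)$, so writing $\sigma'(z/a)/a=1+f(z)$ with $f$ smooth and $f(0)=0$, I control $f(z)$ via the composition estimate Proposition \ref{prop2.5} and then apply the Chemin--Lerner product rule Proposition \ref{prop2.4} together with standard commutator estimates to $f(z)U_x$ and $[\Delta_q,A(U)]U_x$; Taylor-expanding $f(z)=f'(0)z+O(z^2)$ generates the two terms $\sqrt{E(T)}D(T)$ and $E(T)D(T)$, yielding (\ref{R-E20}). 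Picking $\delta_1$ so small that $C(\sqrt{\delta_1}+\delta_1)\leq 1/2$ then absorbs the nonlinear piece into the left-hand side and delivers (\ref{R-E21}). The main obstacle, and the reason the authors revisit the argument of \cite{MXK}, is calibrating the multipliers $\kappa_j(\xi)$ so that $\mathcal{L}_q$ remains equivalent to $\|\Delta_q U\|^2_{L^2}$ uniformly in $q$ while the unavoidable $(1+|\xi|^2)^{-1}$ factor appearing when $a\neq 1$ is absorbed into the dissipation weights, not into the energy; any looser bookkeeping would force one to impose extra regularity on $U_0$ and forfeit the critical index $s_c=3/2$.
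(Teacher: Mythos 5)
Your overall strategy is the same as the paper's: localize (\ref{R-E4}) with \emph{inhomogeneous} blocks $\Delta_q$, get the $y$-dissipation from the basic $L^2$ energy, recover the dissipations of $v$, $z_x$, $u_x$ from cross-product functionals carrying the $\eta_2$-type regularity loss, estimate the nonlinearity ($g(z)=\sigma(z/a)-\sigma(0)-\sigma'(0)z/a=O(z^2)$) with Propositions \ref{prop2.4}--\ref{prop2.5}, and absorb $(\sqrt{E}+E)D$ for small $E(T)$. The paper organizes this as four separate lemmas (Lemmas \ref{lem4.1}--\ref{lem4.4}, the first three imported from \cite{MXK}) rather than one Lyapunov functional, and the genuinely new step for $a\neq1$ is the $v$-estimate: multiplying (\ref{R-E27}) by $(-\Delta_q y,-a\Delta_q z,-a\Delta_q u,-\Delta_q v)$ produces the coupling term $(a^2-1)\Delta_q y\,\Delta_q u_x$, which is split by Young's inequality into $\varepsilon\|u_x\|_{\widetilde{L}^2_T(B^{-1/2}_{2,1})}+C_\varepsilon\|y\|_{\widetilde{L}^2_T(B^{3/2}_{2,1})}$ and closed by taking $\varepsilon$ small against Lemma \ref{lem4.3}. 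Your "calibrate the multipliers" discussion is consistent with this but does not isolate that specific cross term.

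There is, however, a concrete error in your central claimed inequality. With $\mathcal{L}_q\approx\|\Delta_q U\|_{L^2}^2$, the dissipation weights $2^{3q}$, $2^{q}$, $2^{-q}$ are each too strong by a factor $2^{3q}$: running your own procedure (integrate, take square roots, multiply by $2^{3q/2}$, sum in $\ell^1_q$) on
\begin{equation*}
\tfrac{d}{dt}\mathcal{L}_q+c\bigl(2^{3q}\|\Delta_q y\|^2_{L^2}+2^{q}\|\Delta_q(v,z_x)\|^2_{L^2}+2^{-q}\|\Delta_q u_x\|^2_{L^2}\bigr)\lesssim |\mathcal{R}_q|+|\mathcal{N}_q|
\end{equation*}
yields $\|y\|_{\widetilde{L}^2_T(B^{3}_{2,1})}+\|(v,z_x)\|_{\widetilde{L}^2_T(B^{2}_{2,1})}+\|u_x\|_{\widetilde{L}^2_T(B^{1}_{2,1})}$ on the left, not $D(T)$. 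Such a bound would assert a gain of $3/2$ derivatives over the $B^{3/2}_{2,1}$ energy, i.e.\ a parabolic-type smoothing that the hyperbolic system (\ref{R-E4}) does not possess, so the differential inequality cannot hold uniformly in $q\geq0$. The correct weights relative to $\mathcal{L}_q\approx\|\Delta_q U\|^2_{L^2}$ are $1$, $2^{-2q}$ and $2^{-4q}$ (i.e.\ losses of $0$, $1$ and $2$ derivatives below the energy level for $y$, $(v,z_x)$ and $u_x$ respectively, matching $D(T)$ and the weight balance $2^{q/2}\|\Delta_q y\|\,\|\Delta_q u_x\|\leq \varepsilon\,2^{-q}\|\Delta_q u_x\|^2+C_\varepsilon 2^{2q}\|\Delta_q y\|^2$ used in (\ref{R-E30})--(\ref{R-E31})). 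The fix is mechanical, but as written the key step of your argument is false precisely at high frequencies, where the regularity-loss mechanism lives.
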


Actually, in the case of non-equal wave speeds ($a\neq1$), a priori estimates on the dissipations for $y,z_{x}$ and $u_{x}$ coincide with the case of equal wave speeds. For brevity, we present them as lemmas only, the interested reader is referred to \cite{MXK} for proofs.

\begin{lem}(The dissipation for $y$)\label{lem4.1}
If $U\in\widetilde{\mathcal{C}}_{T}(B^{3/2}_{2,1})\cap
\widetilde{\mathcal{C}}^1_{T}(B^{1/2}_{2,1})$ is a solution of
(\ref{R-E4}) for any $T>0$, then
\begin{eqnarray}
&&E(T)+\|y\|_{\widetilde{L}^{2}_{T}(B^{3/2}_{2,1})}\lesssim \|U_{0}\|_{B^{3/2}_{2,1}}+\sqrt{E(T)}D(T). \label{R-E22}
\end{eqnarray}
\end{lem}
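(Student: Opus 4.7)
The plan is to localize the Cauchy problem \eqref{R-E4} by applying the inhomogeneous Littlewood-Paley block $\Delta_q$, run a standard symmetric-hyperbolic energy estimate at the level of each dyadic block, and finally sum the resulting inequalities in $\ell^1$ with weight $2^{3q/2}$. The crucial algebraic input is that the non-symmetric dissipation matrix $L$ satisfies $\langle LU, U\rangle_{\mathbb{R}^4} = \gamma y^2$: the cross contributions $yv$ and $-vy$ from the first and fourth components cancel, so the frictional damping of $\psi_t$ in the original formulation produces pointwise dissipation exactly for $y$, which after localization will become $\gamma \|y_q\|_{L^2}^2$ and, after summation, the $\|y\|_{\widetilde{L}^2_T(B^{3/2}_{2,1})}$ piece on the left-hand side of \eqref{R-E22}.

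Concretely I would pick the diagonal symmetrizer $A_0(U) = \mathrm{diag}(1, 1, \sigma'(z/a)/a^2, 1)$, which makes $A_0(U)A(U)$ symmetric and is uniformly close to the identity for $\|U\|_{L^\infty}$ small; by Lemma~\ref{lem2.2}(5) this is ensured whenever $E(T)\leq\delta_1$ with $\delta_1$ small. Applying $\Delta_q$ to \eqref{R-E4} and setting $U_q := \Delta_q U$ yields
\begin{equation*}
\partial_t U_q + A(U)\partial_x U_q + LU_q = -[\Delta_q, A(U)]\partial_x U.
\end{equation*}
Taking the $L^2$ inner product with $A_0(U)U_q$, integrating by parts using the symmetry of $A_0(U)A(U)$, and observing that $(A_0(U)-I)LU_q = 0$ (because $LU_q$ vanishes in its third component, the only component on which $A_0(U)-I$ acts) gives an identity of the shape
\begin{equation*}
\tfrac12 \tfrac{d}{dt}\langle A_0(U)U_q, U_q\rangle + \gamma\|y_q\|_{L^2}^2 = R_q(t),
\end{equation*}
where $R_q$ collects the commutator, the term from $\partial_t A_0(U)$, and the term from $\partial_x(A_0(U)A(U))$.

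I would then integrate in time, take a square root, multiply by $2^{3q/2}$, and sum in $\ell^1(\mathbb{Z}_{\geq -1})$ as dictated by the Chemin-Lerner definition. The commutator $[\Delta_q, A(U)]\partial_x U$ is handled by the standard paradifferential commutator estimate, the composite $\sigma'(z/a)$ by Proposition~\ref{prop2.5}, and the products by Proposition~\ref{prop2.4}. The upshot is that
\begin{equation*}
\sum_{q\geq -1} 2^{3q/2}\Bigl(\int_0^T |R_q(t)|\,dt\Bigr)^{1/2} \lesssim \sqrt{E(T)}\,D(T),
\end{equation*}
where one factor $\|U\|_{\widetilde{L}^\infty_T(B^{3/2}_{2,1})}^{1/2} = \sqrt{E(T)}$ comes from the $L^\infty$-in-time control of the ``rough'' coefficient and the remaining $\|\nabla U\|_{\widetilde{L}^2_T(B^{1/2}_{2,1})} \lesssim D(T)$ from the dissipative components. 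The initial data contribution is $\|U_{q,0}\|_{L^2}$, which after summation gives $\|U_0\|_{B^{3/2}_{2,1}}$.

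The main obstacle is that we are working exactly at the critical index $3/2 = 1/2 + 1$, so there is no slack in the paraproduct decomposition: every derivative must be placed on a factor that $D(T)$ controls (i.e. one of $y$, $v$, $u_x$, $z_x$), while the factor that eats the $L^\infty_t$ bound must sit inside $E(T)$. Particular care is needed for the pieces coming from $\partial_x(A_0(U)A(U))$ and from the time derivative of the symmetrizer, which through the equation produce $\partial_x z$ and $\partial_x y$; these must be absorbed by $\|z_x\|_{\widetilde{L}^2_T(B^{1/2}_{2,1})}$ and $\|y\|_{\widetilde{L}^2_T(B^{3/2}_{2,1})}$ respectively, which is precisely what $D(T)$ provides. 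This careful bookkeeping is what ultimately closes the estimate at the critical regularity without any loss.
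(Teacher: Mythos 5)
First, a point of reference: the paper does not actually prove Lemma~\ref{lem4.1} --- it states that the $y$-, $z_x$- and $u_x$-dissipation estimates ``coincide with the case of equal wave speeds'' and defers to \cite{MXK}. Your skeleton (localize with $\Delta_q$, symmetrize with $A_0(U)=\mathrm{diag}(1,1,\sigma'(z/a)/a^{2},1)$, use $\langle A_0LU_q,U_q\rangle=\gamma\|y_q\|_{L^2}^2$, integrate in time, take square roots, sum $2^{3q/2}(\cdot)$ in $\ell^1$) is indeed the right one, and it is the only viable one at top order: the semilinear form (\ref{R-E26}) used in the proof of Lemma~\ref{lem4.4} would require $g(z)_x\in \widetilde{L}^2_T(B^{3/2}_{2,1})$, i.e.\ one derivative more on $z_x$ than $D(T)$ supplies, so the commutator structure is essential here. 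Your identification of the cancellations in $L$ and of $(A_0(U)-I)LU_q=0$ is also correct.

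The gap is in the last step, where you assert that every error term factors as one $L^\infty_T$ (energy) factor times quantities controlled by $D(T)$, quoting ``$\|\nabla U\|_{\widetilde{L}^2_T(B^{1/2}_{2,1})}\lesssim D(T)$''. That inequality is false for this system: $D(T)$ controls $u_x$ only in $B^{-1/2}_{2,1}$ and $v$ only in $B^{1/2}_{2,1}$ (hence $v_x$ only in $B^{-1/2}_{2,1}$); only $y_x$ and $z_x$ are available in $\widetilde{L}^2_T(B^{1/2}_{2,1})$. This happens to suffice here because $A(U)$ and $A_0(U)$ depend on $U$ only through $z$, so the coefficient derivatives that occur are $z_x$ and $z_t=ay_x$ --- but you must say so. More seriously, the term $\tfrac12\langle(\partial_tA_0)U_q,U_q\rangle=c\int\sigma''(z/a)\,y_x\,(\Delta_qz)^2\,dx$ does \emph{not} obey your bookkeeping principle: it contains one dissipated factor ($y_x$) and \emph{two} undissipated factors ($\Delta_qz$ twice), since $z$ itself is not damped. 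A direct H\"older in time gives only $\|y_x\|_{L^1_T(L^\infty)}\|\Delta_qz\|^2_{L^\infty_T(L^2)}\lesssim \sqrt{T}\,D(T)E(T)^2$, which is not uniform in $T$, so the estimate does not close as written. The repair is frequency-dependent: for $q\ge0$ use Bernstein (Lemma~\ref{lem2.1}(ii)) to write $\|\Delta_qz\|_{L^2_T(L^2)}\approx 2^{-q}\|\Delta_qz_x\|_{L^2_T(L^2)}$, so that
$2^{3q}\int_0^T|\cdot|\,dt\lesssim \|y_x\|_{L^2_T(L^\infty)}\bigl(2^{3q/2}\|\Delta_qz\|_{L^\infty_T(L^2)}\bigr)\bigl(2^{q/2}\|\Delta_qz_x\|_{L^2_T(L^2)}\bigr)$,
which after the square root and Cauchy--Schwarz in $q$ yields $\sqrt{E(T)}D(T)$; for the single block $q=-1$ this fails, and one must instead integrate by parts in $x$ to trade $y_x$ for $y$ (which, unlike $z$, \emph{is} dissipated in $L^2_T(L^\infty)$), the derivative landing on $\Delta_{-1}z$ or on $\sigma''(z/a)$, both of which are then controlled by $D(T)$. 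Without these two devices the central error term of your proof is unbounded in $T$, so this is a genuine missing step rather than routine bookkeeping.
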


\begin{lem}\label{lem4.2}
(The dissipation for $z_{x}$) If $U\in\widetilde{\mathcal{C}}_{T}(B^{3/2}_{2,1})\cap
\widetilde{\mathcal{C}}^1_{T}(B^{1/2}_{2,1})$ is a solution of
(\ref{R-E4}) for any $T>0$, then
\begin{eqnarray}
\|z_{x}\|_{\widetilde{L}^{2}_{T}(B^{1/2}_{2,1})} &\lesssim & E(T)+\|U_{0}\|_{B^{3/2}_{2,1}}+\|y\|_{\widetilde{L}^{2}_{T}(B^{3/2}_{2,1})}\nonumber\\ &&+
\|v\|_{\widetilde{L}^{2}_{T}(B^{1/2}_{2,1})}+\sqrt{E(T)}D(T). \label{R-E23}
\end{eqnarray}
\end{lem}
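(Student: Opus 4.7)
The plan is to derive the dissipation of $z_x$ via a cross-term Lyapunov functional at the level of dyadic Littlewood--Paley blocks, closely following the strategy of the analogous lemma in \cite{MXK}. Writing $\sigma'(z/a)/a=a+g(z)$ with $g$ smooth and $g(0)=0$, the fourth component of (\ref{R-E4}) reads $y_t-az_x-v+\gamma y=g(z)z_x$. Applying the inhomogeneous block $\Delta_q$ ($q\geq -1$) to it and to the third scalar equation gives
\[
(\Delta_q z)_t=a(\Delta_q y)_x,\qquad (\Delta_q y)_t=a(\Delta_q z)_x+\Delta_q v-\gamma\Delta_q y+\Delta_q(g(z)z_x).
\]

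Next I would form the cross functional $\int_{\mathbb{R}}\Delta_q y\,(\Delta_q z)_x\,dx$. Time-differentiating, substituting the two relations above, using $(\Delta_q z)_{xt}=a(\Delta_q y)_{xx}$ and integrating by parts yields the block-level identity
\[
a\|(\Delta_q z)_x\|_{L^2}^{2}=\frac{d}{dt}\!\int\!\Delta_q y\,(\Delta_q z)_x+a\|(\Delta_q y)_x\|_{L^2}^{2}+\gamma\!\int\!\Delta_q y\,(\Delta_q z)_x-\int\!\Delta_q v\,(\Delta_q z)_x-\int\!\Delta_q(g(z)z_x)\,(\Delta_q z)_x.
\]
Integrating in time over $[0,T]$, I would then bound each bilinear remainder by Cauchy--Schwarz in both space and time and apply Young's inequality with a small parameter to absorb the resulting $\|(\Delta_q z)_x\|_{L^2_T L^2}^{2}$ factors into the left-hand side. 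The boundary contributions of the cross functional at $t=T$ and $t=0$ are handled by $\|\Delta_q y\|_{L^\infty_T L^2}\|(\Delta_q z)_x\|_{L^\infty_T L^2}$ plus the initial-data analogue.

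After taking square roots, multiplying by $2^{q/2}$ and summing over $q\geq -1$ in $\ell^1$, the block-norm sums reassemble into the Chemin--Lerner norms on the right-hand side of (\ref{R-E23}): $\sum_q 2^{q/2}\|(\Delta_q y)_x\|_{L^2_T L^2}$ yields $\|y\|_{\widetilde{L}^2_T(B^{3/2}_{2,1})}$, $\sum_q 2^{q/2}\|\Delta_q v\|_{L^2_T L^2}$ yields $\|v\|_{\widetilde{L}^2_T(B^{1/2}_{2,1})}$, and the boundary pieces $\sum_q 2^{q/2}\bigl(\|\Delta_q y\|_{L^\infty_T L^2}+\|(\Delta_q z)_x\|_{L^\infty_T L^2}\bigr)$ are absorbed in $E(T)$ via the embedding $B^{3/2}_{2,1}\hookrightarrow B^{1/2}_{2,1}$ of Lemma~\ref{lem2.2}, while the $t=0$ analogues contribute to $\|U_0\|_{B^{3/2}_{2,1}}$.

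The main obstacle is the nonlinear source $\|g(z)z_x\|_{\widetilde{L}^2_T(B^{1/2}_{2,1})}$. This is handled via the Moser-type product estimate of Proposition~\ref{prop2.4} (with a time-exponent split $\tfrac{1}{2}=\tfrac{1}{\infty}+\tfrac{1}{2}$, keeping one factor in $L^\infty_T L^\infty$), together with the composition estimate of Proposition~\ref{prop2.5} and $g(0)=0$ to trade $g(z)$ for $z$, and the one-dimensional embedding $B^{1/2}_{2,1}\hookrightarrow L^\infty$. Under the smallness $E(T)\leq\delta_1$, the bound collapses to $E(T)D(T)\lesssim\sqrt{E(T)}D(T)$, which combined with the previous pieces yields (\ref{R-E23}).
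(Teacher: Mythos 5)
The paper itself does not prove Lemma \ref{lem4.2}: it notes just before the statement that the dissipations for $y$, $z_x$ and $u_x$ are unchanged from the equal-wave-speed case and refers to \cite{MXK} for the proofs. Your argument --- the cross functional $\int_{\mathbb{R}}\Delta_q y\,(\Delta_q z)_x\,dx$ at the level of inhomogeneous blocks, the resulting identity, the Young absorption of $\|(\Delta_q z)_x\|_{L^2_TL^2}^2$, and the reassembly of the dyadic sums into the Chemin--Lerner norms of (\ref{R-E23}) --- checks out and is exactly the standard energy-method route one expects the cited proof to follow, so it correctly fills the omitted step. One caution on the nonlinear term: if Proposition \ref{prop2.4} is applied symmetrically with both time-$L^\infty$ factors measured in $L^\infty_x$, the second term produces $\|g(z)\|_{\widetilde{L}^{2}_{T}(B^{1/2}_{2,1})}\approx\|z\|_{\widetilde{L}^{2}_{T}(B^{1/2}_{2,1})}$, which is controlled neither by $E(T)$ nor by $D(T)$; you must instead place $z_x$ in $L^{2}_{T}(L^\infty)$ in that term, or simply use the corollary $\|fg\|_{\widetilde{L}^{2}_{T}(B^{1/2}_{2,1})}\lesssim\|f\|_{\widetilde{L}^{\infty}_{T}(B^{1/2}_{2,1})}\|g\|_{\widetilde{L}^{2}_{T}(B^{1/2}_{2,1})}$ (valid since $s=1/2=n/p$), as the paper does for the analogous term in (\ref{R-E32}), which yields $E(T)\|z_x\|_{\widetilde{L}^{2}_{T}(B^{1/2}_{2,1})}\lesssim E(T)D(T)\lesssim\sqrt{E(T)}\,D(T)$ under the smallness assumed in Proposition \ref{prop4.2}.
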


\begin{lem}\label{lem4.3}
(The dissipation for $u_{x}$) If $U\in\widetilde{\mathcal{C}}_{T}(B^{3/2}_{2,1})\cap
\widetilde{\mathcal{C}}^1_{T}(B^{1/2}_{2,1})$ is a solution of
(\ref{R-E4}) for any $T>0$, then
\begin{eqnarray}
\|u_{x}\|_{\widetilde{L}^{2}_{T}(B^{-1/2}_{2,1})}\lesssim E(T)+\|U_{0}\|_{B^{3/2}_{2,1}}+\|v\|_{\widetilde{L}^{2}_{T}(B^{1/2}_{2,1})}+\|y\|_{\widetilde{L}^{2}_{T}(B^{3/2}_{2,1})}. \label{R-E24}
\end{eqnarray}
\end{lem}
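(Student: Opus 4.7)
The target statement is Lemma \ref{lem4.3}: we must extract an $L^{2}_{T}(B^{-1/2}_{2,1})$ dissipation estimate for $u_{x}$ from the system \eqref{R-E4}. The crucial observation is that, among the four equations of \eqref{R-E4}, the first three are actually linear (the coefficient $\sigma'(z/a)/a$ only appears in the $y$-equation). In particular the two equations $v_{t}=u_{x}-y$ and $u_{t}=v_{x}$ are exactly linear, so no nonlinear remainder will enter this particular estimate at all. My plan is therefore to design a frequency-localized cross-term identity using only these two equations, and then convert a dyadic $L^{2}$-in-$t$ bound into the desired Chemin--Lerner norm.

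Step 1 (localize and build the cross term). Apply the inhomogeneous dyadic block $\Delta_{q}$ to the first two equations, obtaining $(\Delta_{q}v)_{t}=(\Delta_{q}u)_{x}-\Delta_{q}y$ and $(\Delta_{q}u)_{t}=(\Delta_{q}v)_{x}$. I would differentiate $\langle \Delta_{q}u,\partial_{x}\Delta_{q}v\rangle$ in time; one factor yields $\|(\Delta_{q}v)_{x}\|_{L^{2}}^{2}$, while substituting for $(\Delta_{q}v)_{t}$ in the other factor and integrating by parts produces $-\|(\Delta_{q}u)_{x}\|_{L^{2}}^{2}+\langle (\Delta_{q}u)_{x},\Delta_{q}y\rangle$. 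Solving for the target norm gives
\begin{equation*}
\|(\Delta_{q}u)_{x}\|_{L^{2}}^{2}=\|(\Delta_{q}v)_{x}\|_{L^{2}}^{2}-\tfrac{d}{dt}\langle \Delta_{q}u,\partial_{x}\Delta_{q}v\rangle+\langle (\Delta_{q}u)_{x},\Delta_{q}y\rangle.
\end{equation*}

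Step 2 (integrate, absorb, and exploit Bernstein). Integrate over $[0,T]$; apply Young's inequality to the last cross term to absorb $\tfrac{1}{2}\|(\Delta_{q}u)_{x}\|_{L^{2}_{T}L^{2}}^{2}$ into the left-hand side. For the boundary contribution $|\langle \Delta_{q}u,\partial_{x}\Delta_{q}v\rangle|_{0}^{T}|$, I use the Bernstein inequality from Lemma \ref{lem2.1} to exchange the $\partial_{x}$ for a factor $2^{q}$: $\|\partial_{x}\Delta_{q}v\|_{L^{2}}\lesssim 2^{q}\|\Delta_{q}v\|_{L^{2}}$. Taking square roots then yields
\begin{equation*}
\|(\Delta_{q}u)_{x}\|_{L^{2}_{T}L^{2}}\lesssim \|(\Delta_{q}v)_{x}\|_{L^{2}_{T}L^{2}}+\|\Delta_{q}y\|_{L^{2}_{T}L^{2}}+2^{q/2}\bigl(\|\Delta_{q}u(T)\|_{L^{2}}\|\Delta_{q}v(T)\|_{L^{2}}+\|\Delta_{q}u_{0}\|_{L^{2}}\|\Delta_{q}v_{0}\|_{L^{2}}\bigr)^{1/2}.
\end{equation*}

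Step 3 (assemble the Chemin--Lerner norm). Multiply by $2^{-q/2}$ and sum over $q\ge -1$. The first term on the right becomes $\sum_{q}2^{q/2}\|\Delta_{q}v\|_{L^{2}_{T}L^{2}}=\|v\|_{\widetilde{L}^{2}_{T}(B^{1/2}_{2,1})}$ after one more application of Bernstein. The second is $\sum_{q}2^{-q/2}\|\Delta_{q}y\|_{L^{2}_{T}L^{2}}$, bounded by $\|y\|_{\widetilde{L}^{2}_{T}(B^{3/2}_{2,1})}$ via the crude inclusion $B^{3/2}_{2,1}\hookrightarrow B^{-1/2}_{2,1}$ from Lemma \ref{lem2.2}. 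The $2^{q/2}$ in the boundary term cancels the $2^{-q/2}$ weight, leaving $\sum_{q}\|\Delta_{q}u(\tau)\|^{1/2}\|\Delta_{q}v(\tau)\|^{1/2}$, which by Cauchy--Schwarz in $q$ (or AM--GM) is controlled by $\|U(\tau)\|_{B^{0}_{2,1}}\lesssim \|U(\tau)\|_{B^{3/2}_{2,1}}$; evaluating at $\tau=T$ gives a term $\lesssim E(T)$, and at $\tau=0$ a term $\lesssim \|U_{0}\|_{B^{3/2}_{2,1}}$.

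The main technical point I expect to need care with is Step 3, namely matching the $2^{q}$ weights of the boundary term with the $B^{-1/2}_{2,1}$ weight on the left; this is where the "one-regularity-loss" appears and forces me to bound the boundary piece by the \emph{full} energy $E(T)$ rather than by a dissipation-type norm. Once this bookkeeping is done, the inequality \eqref{R-E24} follows directly, with no nonlinear commutator term because the first two equations of \eqref{R-E4} are exactly linear.
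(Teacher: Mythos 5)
Your proof is correct and follows essentially the same route as the paper, which omits the argument and defers to \cite{MXK}: there too the estimate comes from the cross functional $\int\Delta_{q}v\,\Delta_{q}u_{x}\,dx$ built from the two linear equations $v_t=u_x-y$, $u_t=v_x$, followed by absorption of the $y$-term, Bernstein on the dyadic blocks, and summation against the weight $2^{-q/2}$. Your bookkeeping of the boundary term (bounded by $E(T)+\|U_0\|_{B^{3/2}_{2,1}}$ via AM--GM and the embedding $B^{3/2}_{2,1}\hookrightarrow B^{0}_{2,1}$) matches the intended argument.
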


However, the calculation for the dissipation of $v$ is a little different. We would like to give the proof as follows.
\begin{lem}(The dissipation for $v$)\label{lem4.4} If $U\in\widetilde{\mathcal{C}}_{T}(B^{3/2}_{2,1})\cap
\widetilde{\mathcal{C}}^1_{T}(B^{1/2}_{2,1})$ is a solution of
(\ref{R-E4}) for any $T>0$, then
\begin{eqnarray}
\|v\|_{\widetilde{L}^{2}_{T}(B^{1/2}_{2,1})}&\lesssim& E(T)+\|U_{0}\|_{B^{3/2}_{2,1}}+\varepsilon\|u_{x}\|_{\widetilde{L}^{2}_{T}(B^{-1/2}_{2,1})}
\nonumber\\&&+(1+C_{\varepsilon})\|y\|_{\widetilde{L}^{2}_{T}(B^{3/2}_{2,1})}
+E(T)D(T) \label{R-E25}
\end{eqnarray}
for $\varepsilon>0$, where $C_{\varepsilon}$ is a position constant dependent on $\varepsilon$.
\end{lem}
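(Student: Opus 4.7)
The plan is to apply the inhomogeneous Littlewood--Paley projector $\Delta_q$ directly to (\ref{R-E4}) and to work with a cross-term identity for $\|\Delta_q v\|_{L^2}^2$. The use of inhomogeneous rather than homogeneous blocks is dictated by the target: the bound must feature $\|u_x\|_{\widetilde{L}^2_T(B^{-1/2}_{2,1})}$, a negative-index inhomogeneous norm which is the one actually appearing in the dissipation budget $D(T)$ (cf.\ Proposition \ref{prop2.3}), so I want to avoid the detour through $\dot B^{-1/2}_{2,1}$.

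Reading the localized fourth equation as $\Delta_q v = \Delta_q y_t - a\Delta_q z_x + \gamma\Delta_q y - \Delta_q f$, where $f:=N(z)z_x$ collects the only nonlinearity (coming from $\sigma'(z/a)/a-a$), and pairing with $\Delta_q v$ in $L^2(\mathbb{R})$, I reorganize $\langle\Delta_q v,\Delta_q y_t\rangle$ via $\partial_t$-integration by parts and the first equation $v_t=u_x-y$, and transform $-a\langle\Delta_q v,\Delta_q z_x\rangle$ by $\partial_x$-integration by parts using $u_t=v_x$ together with $z_t=ay_x$. This yields
\begin{align*}
\|\Delta_q v\|_{L^2}^2 &= \frac{d}{dt}\bigl[\langle\Delta_q v,\Delta_q y\rangle + a\langle\Delta_q u,\Delta_q z\rangle\bigr] + (a^2-1)\langle\Delta_q u_x,\Delta_q y\rangle \\
&\quad + \|\Delta_q y\|_{L^2}^2 + \gamma\langle\Delta_q v,\Delta_q y\rangle - \langle\Delta_q v,\Delta_q f\rangle.
\end{align*}
The decisive new feature absent when $a=1$ is the coefficient $a^2-1\neq 0$ in front of the $u_x$--$y$ cross-term, which is exactly what forces $u_x$ into the dissipation estimate for $v$. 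Integrating on $[0,T]$, absorbing $\gamma|\langle\Delta_q v,\Delta_q y\rangle|$ by Young's inequality, taking square roots, and multiplying by $2^{q/2}$ before summing in $q\geq -1$ produces $\|v\|_{\widetilde{L}^2_T(B^{1/2}_{2,1})}$ on the left. The boundary contributions $[\langle\Delta_q v,\Delta_q y\rangle + a\langle\Delta_q u,\Delta_q z\rangle]_0^T$ are controlled by $E(T)+\|U_0\|_{B^{3/2}_{2,1}}$ via $\ell^2$-Cauchy--Schwarz in $q$ and the embedding $B^{3/2}_{2,1}\hookrightarrow B^{1/2}_{2,1}$, the $\|\Delta_q y\|_{L^2}^2$ contribution collapses to $\|y\|_{\widetilde{L}^2_T(B^{3/2}_{2,1})}$, and $\langle\Delta_q v,\Delta_q f\rangle$ is absorbed in $E(T)D(T)$ by the Moser-type Propositions \ref{prop2.4}--\ref{prop2.5} applied to $N(z)z_x$.

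The principal obstacle, and the real content of the lemma, is the summation of the $u_x$--$y$ cross-term. Per block one has $\int_0^T|\langle\Delta_q u_x,\Delta_q y\rangle|\,dt\leq\|\Delta_q u_x\|_{L^2_TL^2}\|\Delta_q y\|_{L^2_TL^2}$, so after square-rooting the quadratic inequality the contribution to $\|v\|_{\widetilde{L}^2_T(B^{1/2}_{2,1})}$ is
\[
\sum_{q\geq-1}2^{q/2}\sqrt{\|\Delta_q u_x\|_{L^2_TL^2}\|\Delta_q y\|_{L^2_TL^2}}=\sum_{q\geq-1}\sqrt{2^{-q/2}\|\Delta_q u_x\|_{L^2_TL^2}}\cdot\sqrt{2^{3q/2}\|\Delta_q y\|_{L^2_TL^2}}.
\]
Cauchy--Schwarz in $\ell^2_q$ delivers the upper bound $\|u_x\|_{\widetilde{L}^2_T(B^{-1/2}_{2,1})}^{1/2}\|y\|_{\widetilde{L}^2_T(B^{3/2}_{2,1})}^{1/2}$, and a single application of Young's inequality separates it into the announced $\varepsilon\|u_x\|_{\widetilde{L}^2_T(B^{-1/2}_{2,1})}+C_\varepsilon\|y\|_{\widetilde{L}^2_T(B^{3/2}_{2,1})}$. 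The unbalanced weight splitting $\tfrac12=\tfrac12\cdot(-\tfrac12)+\tfrac12\cdot\tfrac32$ is the algebraic coincidence on which everything hinges: it is the unique splitting that simultaneously reaches only norms controlled by the dissipation budget $D(T)$ and produces precisely the $1$-regularity-loss between $(v,u_x)$ and $(y,z_x)$. Any mismatch would demand higher regularity on $U_0$ and break the critical-regularity framework.
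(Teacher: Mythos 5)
Your proposal is correct and follows essentially the same route as the paper: localize with the inhomogeneous blocks $\Delta_q$, derive the same cross-term energy identity (the paper's (\ref{R-E28}), obtained there by multiplying the four localized equations by $-\Delta_q y$, $-a\Delta_q z$, $-a\Delta_q u$, $-\Delta_q v$ and adding), integrate in $t$, and split the $(a^2-1)\Delta_q u_x\Delta_q y$ term by Young's inequality with exactly the $2^{-q/2}$/$2^{3q/2}$ weight distribution, the nonlinearity being handled by Propositions \ref{prop2.4}--\ref{prop2.5}. The only cosmetic difference is that you phrase the identity as a pairing of the fourth equation with $\Delta_q v$ and write the nonlinearity as $N(z)z_x$ rather than $g(z)_x$; these coincide with the paper's computation.
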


\begin{proof}
It is convenient to rewrite the system (\ref{R-E4}) as follows:
\begin{align}\label{R-E26}
\left\{\begin{array}{l}
        v_t-u_x+y=0,\\[2mm]
        u_t-v_x=0,\\[2mm]
        z_t-ay_x=0,\\[2mm]
         y_t-az_{x}-v+\gamma y=g(z)_{x},
\end{array}\right.
\end{align}
where the smooth function $g(z)$ is defined by
$$g(z)=\sigma(z/a)-\sigma(0)-\sigma'(0)z/a=O(z^2)$$ satisfying $g(0)=0$ and $g'(0)=0$.

Firstly, applying the inhomogeneous frequency-localization operator $\Delta_{q}(q\geq-1)$ to (\ref{R-E26}) gives
\begin{align}\label{R-E27}
\left\{\begin{array}{l}
        \Delta_{q}v_t-\Delta_{q}u_x+\Delta_{q}y=0,\\[2mm]
        \Delta_{q}u_t-\Delta_{q}v_x=0,\\[2mm]
        \Delta_{q}z_t-a\Delta_{q}y_x=0,\\ [2mm]
        \Delta_{q}y_t-a\Delta_{q}z_{x}-\Delta_{q}v+\gamma \Delta_{q}y=\Delta_{q}g(z)_{x}.
\end{array}\right.
\end{align}
Next, multiplying the first equation in (\ref{R-E27}) by $-\Delta_{q}y$, the second one by $-a\Delta_{q}z$,
the third one by $-a\Delta_{q}u$ and the fourth one by $-\Delta_{q}v$, respectively, then adding the resulting equalities,
we have
\begin{eqnarray}
&&-(\Delta_{q}v\Delta_{q}y+a\Delta_{q}u\Delta_{q}z)_{t} +(a\Delta_{q}v\Delta_{q}z+a^2\Delta_{q}u\Delta_{q}y)_{x}+|\Delta_{q}v|^2\nonumber\\ &=&|\Delta_{q}y|^2+
(a^2-1)\Delta_{q}y\Delta_{q}u_x+\gamma\Delta_{q}y\Delta_{q}v-\Delta_{q}g(z)_{x}\Delta_{q}v. \label{R-E28}
\end{eqnarray}

Integrating the equality (\ref{R-E28}) in $x\in \mathbb{R}$, with the aid of Cauchy-Schwarz inequality, we obtain
\begin{eqnarray}
&&\frac{d}{dt}E_{1}[\Delta_{q}U]+\frac{1}{2}\|\Delta_{q}v\|^2_{L^2}\nonumber\\ &\lesssim& \|\Delta_{q}y\|^2_{L^2}+|a^2-1|\|\Delta_{q}y\|_{L^2}\|\Delta_{q}u_{x}\|_{L^2}
\nonumber\\ &&+\|\Delta_{q}g(z)_{x}\|_{L^2}\|\Delta_{q}v\|_{L^2}, \label{R-E29}
\end{eqnarray}
where
$$E_{1}[\Delta_{q}U]:=-\int_{\mathbb{R}}(\Delta_{q}v\Delta_{q}y+\Delta_{q}u\Delta_{q}z)dx.$$
By performing the integral with respect to $t\in [0,T]$, we are led to
\begin{eqnarray}
&&\|\Delta_{q}v\|^2_{L^2_{t}(L^2)}
\nonumber\\  &\lesssim &
\|\Delta_{q}U\|^2_{L^\infty_{T}(L^2)}+\|\Delta_{q}U_{0}\|^2_{L^2}+\|\Delta_{q}y\|^2_{L^2_{T}(L^2)}\nonumber\\ &&
+\|\Delta_{q}y\|_{L^2_{T}(L^2)}\|\Delta_{q}u_{x}\|_{L^2_{T}(L^2)}+\|\Delta_{q}g(z)_{x}\|^2_{L^2_{T}(L^2)}, \label{R-E30}
\end{eqnarray}
where we have noticed the case of $a\neq1$. Furthermore, Young's inequality enables us to get
\begin{eqnarray}
&&2^{\frac{q}{2}}\|\Delta_{q}v\|_{L^2_{T}(L^2)}\nonumber\\ &\lesssim& c_{q}\|U\|_{\widetilde{L}^{\infty}_{T}(B^{1/2}_{2,1})}+c_{q}\|U_{0}\|_{B^{1/2}_{2,1}}+\varepsilon c_{q}\|u_{x}\|_{\widetilde{L}^{2}_{T}(B^{-1/2}_{2,1})}
\nonumber\\ &&
+c_{q}(1+C_{\varepsilon})\|y\|_{\widetilde{L}^{2}_{T}(B^{3/2}_{2,1})}
+c_{q}\|g(z)_{x}\|_{\widetilde{L}^{2}_{T}(B^{1/2}_{2,1})}\label{R-E31}
\end{eqnarray}
for $\varepsilon>0$, where $C_{\varepsilon}$ is a position constant dependent on $\varepsilon$ and each $\{c_{q}\}$ has a possibly different form in (\ref{R-E31}), however, the bound $\|c_{q}\|_{\ell^{1}}\leq1$ is well satisfied.

Recalling the fact $g'(0)=0$, it follows from Propositions \ref{prop2.4}-\ref{prop2.5} that
\begin{eqnarray}
\|g(z)_{x}\|_{\widetilde{L}^{2}_{T}(B^{1/2}_{2,1})}&=&\|g'(z)z_{x}\|_{\widetilde{L}^{2}_{T}(B^{1/2}_{2,1})}
\nonumber\\ &\lesssim&\|g'(z)-g'(0)\|_{\widetilde{L}^{\infty}_{T}(B^{1/2}_{2,1})}\|z_{x}\|_{\widetilde{L}^{2}_{T}(B^{1/2}_{2,1})}
\nonumber\\ &\lesssim&\|z\|_{\widetilde{L}^{\infty}_{T}(B^{1/2}_{2,1})}\|z_{x}\|_{\widetilde{L}^{2}_{T}(B^{1/2}_{2,1})}.\label{R-E32}
\end{eqnarray}
Hence, together with (\ref{R-E31})-(\ref{R-E32}), by summing up  on $q\geq-1$, we deduce that
\begin{eqnarray}
&&\|v\|_{\widetilde{L}^{2}_{T}(B^{1/2}_{2,1})}\nonumber\\ &\lesssim& \|U\|_{\widetilde{L}^{\infty}_{T}(B^{1/2}_{2,1})}+\|U_{0}\|_{B^{1/2}_{2,1}}+\varepsilon \|u_{x}\|_{\widetilde{L}^{2}_{T}(B^{-1/2}_{2,1})}
\nonumber\\ &&
+(1+C_{\varepsilon})\|y\|_{\widetilde{L}^{2}_{T}(B^{3/2}_{2,1})}
+\|z\|_{\widetilde{L}^{\infty}_{T}(B^{1/2}_{2,1})}\|z_{x}\|_{\widetilde{L}^{2}_{T}(B^{1/2}_{2,1})}, \label{R-E33}
\end{eqnarray}
which leads to the inequality (\ref{R-E25}) immediately.
\end{proof}

Having Lemmas \ref{lem4.1}-\ref{lem4.4}, by taking sufficiently small $\varepsilon>0$, we can achieve the proof of Proposition \ref{prop4.2}. For brevity, we feel free to skip
the details. Furthermore, along with local existence result (Proposition \ref{prop4.1}) and a
priori estimate (Proposition \ref{prop4.2}), Theorem \ref{thm1.1} follows from the standard boot-strap argument directly, see \cite{MXK} for similar details.

\section{Optimal decay rates}\setcounter{equation}{0}\label{sec:5}
Due to the better dissipative structure in the case of $a=1$ (see \cite{MXK}), we performed the Littlewood-Paley pointwise estimates for the linearized problem (\ref{R-E5}) and develop decay properties in the framework of Besov spaces. Furthermore, with the help of the frequency-localization Duhamel principle,
the optimal decay estimates of (\ref{R-E4}) are shown by localized time-weighted energy approaches.
For the case of $a\neq1$, if the standard Duhamel principle is used, we need to deal with the weak mechanism of regularity-loss in the price of extra higher regularity,
so it is impossible to achieve $s_{D}=3/2$. Hence, we involve new observations. Actually, we perform ``the square formula of the Duhamel principle" based on the Littlewood-Paley pointwise estimate in Fourier space for the linear system with right-hand side, see (\ref{R-E36})-(\ref{R-E37}). Furthermore, we  proceed the optimal decay estimate for (\ref{R-E4}) in terms of high-frequency and low-frequency decompositions, with the aid of the frequency-localization time-decay inequality developed in Sect.\ref{sec:3}.

To do this, we define the following energy functionals:
\begin{equation}
\mathcal{N}(t)=\sup_{0\leq\tau \leq t}(1+\tau)^{\frac{1}{4}}\|U(\tau)\|_{L^{2}},\ \ \ \mathcal{D}(t)=\|z_{x}(\tau)\|_{L^2_{t}(\dot{B}^{1/2}_{2,1})}. \nonumber
\end{equation}
The optimal decay estimate lies in a nonlinear time-weighted energy inequality, which is include in the following
\begin{lem}\label{lem5.1}
Let $U=(v,u,z,y)^{\top}$ be the global classical solutions in Theorem \ref{thm1.1}. Additionally, if $U_{0}\in\dot{B}^{-1/2}_{2,\infty}$, then it holds that
\begin{eqnarray}
\mathcal{N}(t)\lesssim \|U_{0}\|_{B^{3/2}_{2,1}\cap \dot{B}^{-1/2}_{2,\infty}}+\mathcal{N}(t)\mathcal{D}(t)+\mathcal{N}(t)^2. \label{R-E34}
\end{eqnarray}
\end{lem}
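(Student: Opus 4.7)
The plan is to prove (\ref{R-E34}) via a frequency-localized Duhamel argument in Fourier space, combined with Proposition \ref{prop3.1} and a careful high/low-frequency decomposition of the nonlinear source. First I would rewrite (\ref{R-E4}) in the semi-linear form (\ref{R-E26}), so that the only nonlinear contribution is $F(U) = (0,0,0,g(z)_x)^\top$ with $g(z) = O(z^2)$. A Lyapunov-type computation on the Fourier side of the localized equations -- the ``square formula of the Duhamel principle'' referred to in (\ref{R-E36})--(\ref{R-E37}) -- should yield the pointwise bound
\[
|\widehat{\dot{\Delta}_q U}(t,\xi)| \lesssim e^{-c\eta_2(\xi)t}|\widehat{\dot{\Delta}_q U_0}(\xi)| + \int_0^t e^{-c\eta_2(\xi)(t-s)}|\widehat{\dot{\Delta}_q F(s)}(\xi)|\, ds,
\]
where $\eta_2(\xi) = |\xi|^2/(1+|\xi|^2)^2$. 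Taking the $L^2_\xi$ norm on both sides, using Plancherel together with $\|\cdot\|_{L^2} \sim \|\cdot\|_{\dot{B}^0_{2,2}}$, and summing in $q$ would control $\|U(t)\|_{L^2}$.

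Next I would feed this into Proposition \ref{prop3.1} with $\sigma = 0$, $r = 2$, $n = 1$. For the linear contribution, the choices $s = 1/2$ (low-frequency) and $(\ell,p) = (3/2, 2)$ (high-frequency) give $(1+t)^{-1/4}\|U_0\|_{\dot{B}^{-1/2}_{2,\infty}} + (1+t)^{-3/4}\|U_0\|_{\dot{B}^{3/2}_{2,2}} \lesssim (1+t)^{-1/4} I_0$. For the nonlinear integrand, Proposition \ref{prop3.1} furnishes, for each $s \leq t$, a low-frequency term of order $(1+t-s)^{-1/4}\|F(s)\|_{\dot{B}^{-1/2}_{2,\infty}}$ and a high-frequency term of order $(1+t-s)^{-\ell/2+\frac{1}{2}(1/p-1/2)}\|F(s)\|_{\dot{B}^{\ell}_{p,2}}$. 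The crucial freedom is the parameter $p$: following the strategy announced in the introduction, I would take $p=1$ via the embedding $L^1 \hookrightarrow \dot{B}^{-1/2}_{2,\infty}$ of Lemma \ref{lem2.3} on the part of $F$ controlled in $L^1$, and $p=2$ (at the cost of a larger $\ell$) on the part where Moser estimates naturally produce $L^2$-based Besov norms.

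Since $g(z)_x = g'(z) z_x$ with $g'(0)=0$, the low-frequency piece is bounded by H\"older: $\|g(z)_x\|_{\dot{B}^{-1/2}_{2,\infty}} \lesssim \|g(z)_x\|_{L^1} \lesssim \|z\|_{L^2}\|z_x\|_{L^2}$, while high-frequency estimates on $\|g(z)_x\|_{\dot{B}^{\ell}_{p,2}}$ follow from Propositions \ref{prop2.2} and \ref{prop2.5} together with the uniform bound $\|z\|_{\widetilde L^\infty_T(B^{3/2}_{2,1})} \lesssim I_0$ furnished by Theorem \ref{thm1.1}. Combining $\|z(s)\|_{L^2} \leq \mathcal{N}(t)(1+s)^{-1/4}$ with Cauchy--Schwarz in time and the standard bound $\int_0^t (1+t-s)^{-1/2}(1+s)^{-1/2}\,ds \lesssim 1$, the low-frequency piece contributes $(1+t)^{-1/4}\mathcal{N}(t)\mathcal{D}(t)$, while the high-frequency piece -- after splitting $[0,t]$ at $s = t/2$ to avoid borderline kernels -- contributes $(1+t)^{-1/4}\mathcal{N}(t)^2$. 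Multiplying through by $(1+t)^{1/4}$ and taking the supremum in $t$ then yields (\ref{R-E34}).

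The main obstacle I anticipate is calibrating the parameters in Proposition \ref{prop3.1} and the Moser inequalities so that (i) the final time weight is exactly $(1+t)^{-1/4}$ and not slower, and (ii) no regularity beyond $B^{3/2}_{2,1}$ is demanded of $U_0$. This is precisely where the flexibility in $p$ is essential: the regularity-loss term in Proposition \ref{prop3.1} normally forces $\ell$ to be large, but by using $p=1$ on selected frequency bands one can keep $\ell$ only marginally above $1/2$ and still extract the correct decay, while simultaneously bounding $F$ in norms that are compatible with the bilinear estimates on $g'(z)z_x$. Once the decomposition is chosen, computing the Moser bounds and verifying integrability of each time kernel is largely mechanical, though the bookkeeping is delicate.
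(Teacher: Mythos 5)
Your overall architecture---the semilinear reformulation (\ref{R-E26}), Proposition \ref{prop3.1} with a high/low-frequency split, the further splitting of the time integral at $s=t/2$, and the interplay between $p=1$ and $p=2$---matches the paper's. The genuine gap is that you build everything on the \emph{first-power} frequency-localized Duhamel formula $|\widehat{\dot{\Delta}_q U}(t)|\lesssim e^{-c\eta t}|\widehat{\dot{\Delta}_q U_0}|+\int_0^t e^{-c\eta(t-s)}|\widehat{\dot{\Delta}_q F}(s)|\,ds$, which is precisely what the paper says cannot work when $a\neq1$ (see the discussion opening Section \ref{sec:5}). The source $g(z)_x=g'(z)z_x$ carries the factor $z_x$, which is controlled only in $L^2_t$ (through $\mathcal{D}(t)$), not pointwise in time. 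Your low-frequency step then reduces to $\int_0^t(1+t-s)^{-1/4}(1+s)^{-1/4}\|z_x(s)\|_{L^2}\,ds$, and Cauchy--Schwarz with $\int_0^t(1+t-s)^{-1/2}(1+s)^{-1/2}ds\lesssim1$ yields only $\mathcal{N}(t)\mathcal{D}(t)$ with \emph{no} factor $(1+t)^{-1/4}$; the bound $\int_0^t(1+t-s)^{-1/4}(1+s)^{-1/4}h(s)\,ds\lesssim(1+t)^{-1/4}\|h\|_{L^2_t}$ is in fact false (test $h(s)=(1+s)^{-1/2}$). After multiplying by $(1+t)^{1/4}$ the inequality does not close. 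The high-frequency part is worse: with $p=1$, $n=1$ the kernel decays like $(1+t-s)^{-\ell/2+1/4}$, so any decay requires $\ell>1/2$, which forces a norm $\|g(z)\|_{\dot{B}^{1+\ell}_{1,2}}$ demanding more than $3/2$ derivatives of $z$; at the borderline $\ell=1/2$ the kernel does not decay and you are again integrating the $L^2_t$-only quantity $\|z_x\|_{\dot{B}^{1/2}_{2,1}}$ over $[t/2,t]$, which costs an extra factor $t^{1/2}$. This is the regularity-loss obstruction itself, and no calibration of $(\ell,p)$ removes it.

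The paper's way out---its main new ingredient here---is the ``square formula'' (\ref{R-E36})--(\ref{R-E37}): a Lyapunov inequality for $|\widehat{\dot{\Delta}_q U}|^2$ whose Gronwall solution bounds the \emph{square} of the solution by $e^{-c\eta t}|\widehat{\dot{\Delta}_q U_0}|^2+\int_0^t e^{-c\eta(t-\tau)}\xi^2|\widehat{\dot{\Delta}_q g}|^2\,d\tau$. Estimating $\|U\|_{L^2}^2$ rather than $\|U\|_{L^2}$ doubles every decay exponent furnished by Proposition \ref{prop3.1} and, crucially, replaces $\|z_x\|_{\dot{B}^{1/2}_{2,1}}$ by its square, which is integrable in time with $\int_0^t\|z_x\|^2_{\dot{B}^{1/2}_{2,1}}d\tau=\mathcal{D}(t)^2\lesssim\|U_0\|^2_{B^{3/2}_{2,1}}$ by (\ref{R-E9}); the decaying weights $\|z\|^2_{L^2}\lesssim\mathcal{N}(t)^2(1+\tau)^{-1/2}$ and $\|z\|^2_{L^{\infty}}$ are then pulled out as suprema over $[t/2,t]$ and $[0,t/2]$ respectively. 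This is what produces (\ref{R-E41}), (\ref{R-E42}) and (\ref{R-E45}), hence the prefactor $(1+t)^{-1/2}$ on $\|U\|^2_{L^2}$ and, after taking square roots, (\ref{R-E34}). Unless you replace your Duhamel formula by this squared version, the estimates you outline cannot be made to close at the regularity $3/2$.
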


\begin{proof}
As in \cite{MK}, perform the energy method in Fourier spaces to get
\begin{eqnarray}
\frac{d}{dt}E[\hat{U}]+c_{3}\eta_{1}(\xi)|\hat{U}|^2\lesssim \xi^2|\hat{g}|^2, \label{R-E35}
\end{eqnarray}
with $\eta_{1}(\xi)=\frac{\xi^2}{(1+\xi^2)^2}$, where $E[\hat{U}]\approx|\hat{U}|^2$.
As a matter of fact, following from the derivation of (\ref{R-E35}), we can obtain the
corresponding Littlewood-Paley pointwise energy inequality
\begin{eqnarray}
\frac{d}{dt}E[\widehat{\dot{\Delta}_{q}U}]+c_{3}\eta_{1}|\widehat{\dot{\Delta}_{q}U}|^2\lesssim \xi^2|\widehat{\dot{\Delta}_{q}g}|^2, \label{R-E36}
\end{eqnarray}
where $E[\widehat{\dot{\Delta}_{q}U}]\approx|\widehat{\dot{\Delta}_{q}U}|^2$. Gronwall's inequality implies that
\begin{eqnarray}
|\widehat{\dot{\Delta}_{q}U}|^2\lesssim e^{-c_{3}\eta_{1}t}|\widehat{\dot{\Delta}_{q}U_{0}}|^2+\int_{0}^{t}e^{-c_{3}\eta_{1}(t-\tau)}\xi^2|\widehat{\dot{\Delta}_{q}g}|^2d\tau. \label{R-E37}
\end{eqnarray}
It follows from Fubini and Plancherel theorems that
\begin{eqnarray}
\|U\|^2_{L^2}&=&\sum_{q\in \mathbb{Z}}\|\dot{\Delta}_{q}U\|^{2}_{L^{2}}\nonumber\\&\lesssim&
\sum_{q\in \mathbb{Z}}\|\widehat{\dot{\Delta}_{q}U_{0}}e^{-\frac{1}{2}c_{3}\eta_{1}(\xi)t}\|^{2}_{L^{2}}\nonumber\\&&+\int^{t}_{0}
\sum_{q\in \mathbb{Z}}\||\xi|\widehat{\dot{\Delta}_{q}g}e^{-\frac{1}{2}c_{3}\eta_{1}(\xi)(t-\tau)}\|^{2}_{L^{2}}d\tau
\nonumber\\&\triangleq& I_{1}+I_{2}. \label{R-E38}
\end{eqnarray}
For $I_{1}$, by taking $p=r=2, \sigma=0, s=1/2$ and $\ell=1$ in Proposition \ref{prop3.1}, we arrive at
\begin{eqnarray}
I_{1}&=&\Big(\sum_{q<0}+\sum_{q\geq0}\Big)\Big(\cdot\cdot\cdot\Big)\nonumber
\\&\lesssim&\|U_{0}\|^{2}_{\dot{B}^{-1/2}_{2,\infty}}(1+t)^{-\frac{1}{2}}+\sum_{q\geq0}2^{2q}\|\dot{\Delta}_{q}U_{0}\|^{2}_{L^{2}}(1+t)^{-1}\nonumber
\\&\lesssim&\|U_{0}\|^{2}_{\dot{B}^{-1/2}_{2,\infty}}(1+t)^{-\frac{1}{2}}+\|U_{0}\|^{2}_{\dot{B}^{1}_{2,2}}(1+t)^{-1}\nonumber
\\&\lesssim&\|U_{0}\|^{2}_{\dot{B}^{-1/2}_{2,\infty}\cap B^{3/2}_{2,1}}(1+t)^{-\frac{1}{2}}. \label{R-E39}
\end{eqnarray}

Next, we begin to bound the nonlinear term on the right-hand side of (\ref{R-E38}), which is written as the sum of low-frequency and high-frequency
\begin{eqnarray}
I_{2}=\int^{t}_{0}\Big(\sum_{q<0}+\sum_{q\geq0}\Big)\Big(\cdot\cdot\cdot\Big)\triangleq I_{2L}+I_{2H}. \label{R-E40}
\end{eqnarray}
For $I_{2L}$, by taking $r=2, \sigma=1$ and $ s=1/2$ in Proposition \ref{prop3.1}, we have
\begin{eqnarray}
I_{2L}&&\leq\int^{t}_{0}(1+t-\tau)^{-\frac{3}{2}}\|g(z)\|^{2}_{\dot{B}^{-1/2}_{2,\infty}}d\tau\nonumber\\&&
\lesssim\int^{t}_{0}(1+t-\tau)^{-\frac{3}{2}}\|g(z)\|^{2}_{L^{1}}d\tau\nonumber\\&&\lesssim
\int^{t}_{0}(1+t-\tau)^{-\frac{3}{2}}\|z(\tau)\|^4_{L^2}d\tau\nonumber\\&&\lesssim
\mathcal{N}^{4}(t)\int^{t}_{0}(1+t-\tau)^{-\frac{3}{2}}(1+t)^{-1}d\tau\nonumber\\&&\lesssim \mathcal{N}^{4}(t)(1+t)^{-1}, \label{R-E41}
\end{eqnarray}
where we used the embedding $L^1(\mathbb{R})\hookrightarrow \dot{B}^{-1/2}_{2,\infty}(\mathbb{R})$ in Lemma \ref{lem2.3} and the fact $g(z)=O(z^2)$.
For the high-frequency part $I_{2H}$, more elaborate estimates are needed. For the purpose, we write
\begin{eqnarray*}
I_{2H}=\Big(\int^{t/2}_{0}+\int^{t}_{t/2}\Big)\Big(\cdot\cdot\cdot\Big)\triangleq I_{2H1}+I_{2H2}.
\end{eqnarray*}
For $I_{2H1}$, taking $p=r=2, \sigma=1$ and $\ell=1/2$ in Proposition \ref{prop3.1} gives
\begin{eqnarray}
I_{2H1}&=&\int_{0}^{t/2}\sum_{q\geq0}2^{3q}\|\dot{\Delta}_{q}g(z)\|^{2}_{L^{2}}(1+t-\tau)^{-\frac{1}{2}}d\tau\nonumber
\\&\leq&\int_{0}^{t/2}(1+t-\tau)^{-\frac{1}{2}}\|g(z)\|^2_{\dot{B}^{3/2}_{2,2}}d\tau. \label{R-E411}
\end{eqnarray}
On the other hand, recalling $g(z)=O(z^2)$, Proposition \ref{prop2.1} and Lemmas \ref{lem2.1}-\ref{lem2.2} enable us to get
\begin{eqnarray}
\|g(z)\|_{\dot{B}^{3/2}_{2,2}}\lesssim\|g(z)\|_{\dot{B}^{3/2}_{2,1}}\lesssim \|z\|_{L^\infty}\|z_{x}\|_{\dot{B}^{1/2}_{2,1}}. \label{R-E412}
\end{eqnarray}
Combine (\ref{R-E411}) and (\ref{R-E412}) to arrive at
\begin{eqnarray}
I_{2H1}&\lesssim&\int_{0}^{t/2}(1+t-\tau)^{-\frac{1}{2}}\|z(\tau)\|^2_{L^\infty}\|z_{x}(\tau)\|^2_{\dot{B}^{1/2}_{2,1}}d\tau
\nonumber \\&\lesssim& \sup_{0\leq\tau\leq t/2}\Big\{(1+t-\tau)^{-\frac{1}{2}}\|z(\tau)\|^{2}_{L^{\infty}}\Big\}\int_{0}^{t/2}\|z_{x}(\tau)\|^2_{\dot{B}^{1/2}_{2,1}}d\tau
\nonumber \\&\lesssim& (1+t)^{-\frac{1}{2}}\|U_{0}\|^2_{B^{3/2}_{2,1}}\mathcal{D}^2(t)
\nonumber \\&\lesssim& (1+t)^{-\frac{1}{2}}\|U_{0}\|^{2}_{B^{3/2}_{2,1}}. \label{R-E42}
\end{eqnarray}
For the last step of (\ref{R-E42}), we would like to explain a little. It follows from Proposition \ref{prop2.3} and Remark \ref{rem6.1} that
\begin{eqnarray}
\mathcal{D}(t)\lesssim \|z_{x}\|_{\widetilde{L}^2_{t}(\dot{B}^{1/2}_{2,1})}\lesssim \|z_{x}\|_{\widetilde{L}^2_{t}(B^{1/2}_{2,1})} \lesssim \|U_{0}\|_{B^{3/2}_{2,1}},\label{R-E499}
\end{eqnarray}
where we used the energy inequality (\ref{R-E9}) in Theorem \ref{thm1.1}. By choosing $r=2, p=\sigma=1$ and $\ell=1/2$ in Proposition \ref{prop3.1}, $I_{2H2}$ is proceeded as
\begin{eqnarray}
I_{2H2}&=&\int_{t/2}^{t}\sum_{q\geq0}2^{3q}\|\dot{\Delta}_{q}g(z)\|^{2}_{L^{1}}d\tau\nonumber \\&\leq& \int_{t/2}^{t}\|g(z)\|^2_{\dot{B}^{3/2}_{1,2}}d\tau. \label{R-E43}
\end{eqnarray}
Thanks to $g(z)=O(z^2)$, it follows from Proposition \ref{prop2.2} that
\begin{eqnarray}
\|g(z)\|_{\dot{B}^{3/2}_{1,2}}\leq\|g(z)\|_{\dot{B}^{3/2}_{1,1}}\lesssim\|z\|_{L^{2}}\|z_{x}\|_{\dot{B}^{1/2}_{2,1}}. \label{R-E44}
\end{eqnarray}
Together with (\ref{R-E43})-(\ref{R-E44}), we are led to
\begin{eqnarray}
I_{2H2}&\lesssim& \mathcal{N}^{2}(t)\int_{t/2}^{t} (1+\tau)^{-\frac{1}{2}}\|z_{x}(\tau)\|^2_{\dot{B}^{1/2}_{2,1}}d\tau \nonumber
\\&\lesssim&\mathcal{N}^{2}(t) \sup_{t/2\leq\tau\leq t}(1+\tau)^{-\frac{1}{2}} \int_{t/2}^{t}\|z_{x}(\tau)\|^2_{\dot{B}^{1/2}_{2,1}}d\tau
 \nonumber
\\&\lesssim& (1+t)^{-\frac{1}{2}} \mathcal{N}^{2}(t)\mathcal{D}^2(t). \label{R-E45}
\end{eqnarray}
Combine (\ref{R-E42}) and (\ref{R-E45}) to get
\begin{eqnarray}
I_{2H}&\lesssim& (1+t)^{-\frac{1}{2}}\|U_{0}\|^{2}_{B^{3/2}_{2,1}}+(1+t)^{-\frac{1}{2}} \mathcal{N}^{2}(t)\mathcal{D}^2(t). \label{R-E46}
\end{eqnarray}
Therefore, it follows from (\ref{R-E41}) and (\ref{R-E46}) that
\begin{eqnarray}
I_{2}&\lesssim&(1+t)^{-1}\mathcal{N}^{4}(t)+(1+t)^{-\frac{1}{2}}\|U_{0}\|^{2}_{B^{3/2}_{2,1}}\nonumber
\\&&
+(1+t)^{-\frac{1}{2}} \mathcal{N}^{2}(t)\mathcal{D}^2(t). \label{R-E47}
\end{eqnarray}
Finally, noticing (\ref{R-E38})-(\ref{R-E39}) and (\ref{R-E47}), we conclude that
\begin{eqnarray}
\|U\|^2_{L^2}&\lesssim& (1+t)^{-\frac{1}{2}}\|U_{0}\|^{2}_{\dot{B}^{-1/2}_{2,\infty}\cap B^{3/2}_{2,1}}+(1+t)^{-\frac{1}{2}} \mathcal{N}^{2}(t)\mathcal{D}^2(t)\nonumber
\\&&+(1+t)^{-1}\mathcal{N}^{4}(t) \label{R-E48}
\end{eqnarray}
which leads to (\ref{R-E34}) directly.
\end{proof}

\noindent \textbf{Proof of Theorem \ref{thm1.2}.}
Note that (\ref{R-E499}), we arrive at
\begin{eqnarray}
\mathcal{D}(t)\lesssim\|U_{0}\|_{B^{3/2}_{2,1}}\lesssim \|U_{0}\|_{B^{3/2}_{2,1}\cap\dot{B}^{-1/2}_{2,\infty}}. \label{R-E50}
\end{eqnarray}
Thus, if the norm $\|U_{0}\|_{B^{3/2}_{2,1}\cap\dot{B}^{-1/2}_{2,\infty}}$ is sufficiently small, then we have
\begin{eqnarray}
\mathcal{N}(t)\lesssim \|U_{0}\|_{B^{3/2}_{2,1}\cap\dot{B}^{-1/2}_{2,\infty}}+\mathcal{N}(t)^{2} \label{R-E51}
\end{eqnarray}
which implies that $\mathcal{N}(t)\lesssim \|U_{0}\|_{B^{3/2}_{2,1}\cap\dot{B}^{-1/2}_{2,\infty}}$, provided that
$\|U_{0}\|_{B^{3/2}_{2,1}\cap\dot{B}^{-1/2}_{2,\infty}}$ is sufficiently small. Consequently,
the desired decay estimate in Theorem \ref{thm1.2} follows
\begin{eqnarray}
\|U\|_{L^{2}}\lesssim \|U_{0}\|_{B^{3/2}_{2,1}\cap\dot{B}^{-1/2}_{2,\infty}}(1+t)^{-\frac{1}{4}}. \label{R-E52}
\end{eqnarray}
Hence, the proof of Theorem \ref{thm1.2} is complete eventually. \hspace{32mm} $\square$

\section{Appendix}\setcounter{equation}{0}\label{sec:6}
For convenience of reader, in this section, we review the Littlewood--Paley
decomposition and definitions for Besov spaces and Chemin-Lerner spaces in $\mathbb{R}^{n}(n\geq1)$, see
\cite{BCD} for more details.

Let ($\varphi, \chi)$ is a
couple of smooth functions valued in [0,1] such that $\varphi$ is
supported in the shell
$\textbf{C}(0,\frac{3}{4},\frac{8}{3})=\{\xi\in\mathbb{R}^{n}|\frac{3}{4}\leq|\xi|\leq\frac{8}{3}\}$,
$\chi$ is supported in the ball $\textbf{B}(0,\frac{4}{3})=
\{\xi\in\mathbb{R}^{n}||\xi|\leq\frac{4}{3}\}$ satisfying
$$
\chi(\xi)+\sum_{q\in\mathbb{N}}\varphi(2^{-q}\xi)=1,\ \ \ \ q\in
\mathbb{N},\ \ \xi\in\mathbb{R}^{n}
$$
and
$$
\sum_{k\in\mathbb{Z}}\varphi(2^{-k}\xi)=1,\ \ \ \ k\in \mathbb{Z},\
\ \xi\in\mathbb{R}^{n}\setminus\{0\}.
$$
For $f\in\mathcal{S'}$(the set of temperate distributions
which is the dual of the Schwarz class $\mathcal{S}$),  define
$$
\Delta_{-1}f:=\chi(D)f=\mathcal{F}^{-1}(\chi(\xi)\mathcal{F}f),\
\Delta_{q}f:=0 \ \  \mbox{for}\ \  q\leq-2;
$$
$$
\Delta_{q}f:=\varphi(2^{-q}D)f=\mathcal{F}^{-1}(\varphi(2^{-q}|\xi|)\mathcal{F}f)\
\  \mbox{for}\ \  q\geq0;
$$
$$
\dot{\Delta}_{q}f:=\varphi(2^{-q}D)f=\mathcal{F}^{-1}(\varphi(2^{-q}|\xi|)\mathcal{F}f)\
\  \mbox{for}\ \  q\in\mathbb{Z},
$$
where $\mathcal{F}f$, $\mathcal{F}^{-1}f$ represent the Fourier
transform and the inverse Fourier transform on $f$, respectively. Observe that
the operator $\dot{\Delta}_{q}$ coincides with $\Delta_{q}$ for $q\geq0$.

Denote by $\mathcal{S}'_{0}:=\mathcal{S}'/\mathcal{P}$ the tempered
distributions modulo polynomials $\mathcal{P}$. We first give the definition of homogeneous Besov spaces.

\begin{defn}\label{defn6.1}
For $s\in \mathbb{R}$ and $1\leq p,r\leq\infty,$ the homogeneous
Besov spaces $\dot{B}^{s}_{p,r}$ is defined by
$$\dot{B}^{s}_{p,r}=\{f\in S'_{0}:\|f\|_{\dot{B}^{s}_{p,r}}<\infty\},$$
where
$$\|f\|_{\dot{B}^{s}_{p,r}}
=\left\{\begin{array}{l}\Big(\sum_{q\in\mathbb{Z}}(2^{qs}\|\dot{\Delta}_{q}f\|_{L^p})^{r}\Big)^{1/r},\
\ r<\infty, \\ \sup_{q\in\mathbb{Z}}
2^{qs}\|\dot{\Delta}_{q}f\|_{L^p},\ \ r=\infty.\end{array}\right.
$$\end{defn}

Similarly, the definition of inhomogeneous Besov spaces is stated as follows.
\begin{defn}\label{defn6.2}
For $s\in \mathbb{R}$ and $1\leq p,r\leq\infty,$ the inhomogeneous
Besov spaces $B^{s}_{p,r}$ is defined by
$$B^{s}_{p,r}=\{f\in S':\|f\|_{B^{s}_{p,r}}<\infty\},$$
where
$$\|f\|_{B^{s}_{p,r}}
=\left\{\begin{array}{l}\Big(\sum_{q=-1}^{\infty}(2^{qs}\|\Delta_{q}f\|_{L^p})^{r}\Big)^{1/r},\
\ r<\infty, \\ \sup_{q\geq-1} 2^{qs}\|\Delta_{q}f\|_{L^p},\ \
r=\infty.\end{array}\right.
$$
\end{defn}

On the other hand, we also present the definition of Chemin-Lerner
spaces first initialled by J.-Y. Chemin and N. Lerner
\cite{CL}, which are the refinement of the space-time mixed spaces
$L^{\theta}_{T}(\dot{B}^{s}_{p,r})$ or
$L^{\theta}_{T}(B^{s}_{p,r})$.

\begin{defn}\label{defn6.3}
For $T>0, s\in\mathbb{R}, 1\leq r,\theta\leq\infty$, the homogeneous
mixed Chemin-Lerner spaces
$\widetilde{L}^{\theta}_{T}(\dot{B}^{s}_{p,r})$ is defined by
$$\widetilde{L}^{\theta}_{T}(\dot{B}^{s}_{p,r}):
=\{f\in
L^{\theta}(0,T;\mathcal{S}'_{0}):\|f\|_{\widetilde{L}^{\theta}_{T}(\dot{B}^{s}_{p,r})}<+\infty\},$$
where
$$\|f\|_{\widetilde{L}^{\theta}_{T}(\dot{B}^{s}_{p,r})}:=\Big(\sum_{q\in\mathbb{Z}}(2^{qs}\|\dot{\Delta}_{q}f\|_{L^{\theta}_{T}(L^{p})})^{r}\Big)^{\frac{1}{r}}$$
with the usual convention if $r=\infty$.
\end{defn}

\begin{defn}\label{defn6.4}
For $T>0, s\in\mathbb{R}, 1\leq r,\theta\leq\infty$, the
inhomogeneous Chemin-Lerner spaces
$\widetilde{L}^{\theta}_{T}(B^{s}_{p,r})$ is defined by
$$\widetilde{L}^{\theta}_{T}(B^{s}_{p,r}):
=\{f\in
L^{\theta}(0,T;\mathcal{S}'):\|f\|_{\widetilde{L}^{\theta}_{T}(B^{s}_{p,r})}<+\infty\},$$
where
$$\|f\|_{\widetilde{L}^{\theta}_{T}(B^{s}_{p,r})}:=\Big(\sum_{q\geq-1}(2^{qs}\|\Delta_{q}f\|_{L^{\theta}_{T}(L^{p})})^{r}\Big)^{\frac{1}{r}}$$
with the usual convention if $r=\infty$.
\end{defn}

We further define
$$\widetilde{\mathcal{C}}_{T}(B^{s}_{p,r}):=\widetilde{L}^{\infty}_{T}(B^{s}_{p,r})\cap\mathcal{C}([0,T],B^{s}_{p,r})
$$ and $$\widetilde{\mathcal{C}}^1_{T}(B^{s}_{p,r}):=\{f\in\mathcal{C}^1([0,T],B^{s}_{p,r})|\partial_{t}f\in\widetilde{L}^{\infty}_{T}(B^{s}_{p,r})\},$$
where the index $T$ will be omitted when $T=+\infty$.

By Minkowski's inequality, Chemin-Lerner spaces can be linked with the usual space-time mixed spaces
$L^{\theta}_{T}(X)$ with $X=B^{s}_{p,r}$ or $\dot{B}^{s}_{p,r}$.
\begin{rem}\label{rem6.1}
It holds that
$$\|f\|_{\widetilde{L}^{\theta}_{T}(X)}\leq\|f\|_{L^{\theta}_{T}(X)}\,\,\,
\mbox{if}\,\, r\geq\theta;\ \ \ \
\|f\|_{\widetilde{L}^{\theta}_{T}(X)}\geq\|f\|_{L^{\theta}_{T}(X)}\,\,\,
\mbox{if}\,\, r\leq\theta.
$$\end{rem}

\section*{Acknowledgments}
J. Xu is partially supported by the National
Natural Science Foundation of China (11471158), the Program for New Century Excellent
Talents in University (NCET-13-0857) and the Fundamental Research Funds for the Central
Universities (NE2015005). The work is also partially supported by
Grant-in-Aid for Scientific Researches (S) 25220702 and (A) 22244009.

\newpage

\end{document}